\def\Wt(#1){m_{#1}} \let\wh=\widehat
\DeclareMathOperator{\Aut}{Aut}
\DeclareMathOperator{\Hilb}{Hilb}
\DeclareMathOperator{\Ker}{Ker}
\DeclareMathOperator{\Spec}{Spec}
\DeclareMathOperator{\cod}{cod}
\def\cSym{\mathop{\rm Sym}\nolimits}
\let\bb=\mathbb
\let\I=\mathbf
\let\into=\hookrightarrow
\let\mc=\mathcal
\let\onto=\twoheadrightarrow
\let\ox=\otimes
\let\wt=\widetilde
\let\vf=\varphi
\let\x=\times
\def\risom{\buildrel\sim\over{\smashedlongrightarrow}}
 \def\smashedlongrightarrow{\setbox0=\hbox{$\longrightarrow$}\ht0=1.25pt\box0}
\numberwithin{equation}{section}
\newtheorem{thm}{Theorem}[section]
\newtheorem{cor}[thm]{Corollary}
\newtheorem{lem}[thm]{Lemma}
\newtheorem{prp}[thm]{Proposition}
\theoremstyle{definition}
\newtheorem{dfn}[thm]{Definition}
\newtheorem{rmk}[thm]{Remark}
\newtheorem{sbs}[thm]{}
\let\x=\times
\begin{document}
\allowdisplaybreaks

\newcommand{\arXivNumber}{2202.11611}

\renewcommand{\thefootnote}{}

\renewcommand{\PaperNumber}{059}

\FirstPageHeading

\ShortArticleName{Node Polynomials for Curves on Surfaces}

\ArticleName{Node Polynomials for Curves on Surfaces\footnote{This paper is a~contribution to the Special Issue on Enumerative and Gauge-Theoretic Invariants in honor of Lothar G\"ottsche on the occasion of his 60th birthday. The~full collection is available at \href{https://www.emis.de/journals/SIGMA/Gottsche.html}{https://www.emis.de/journals/SIGMA/Gottsche.html}}}

\Author{Steven KLEIMAN~$^{\rm a}$ and Ragni PIENE~$^{\rm b}$}

\AuthorNameForHeading{S.~Kleiman and R.~Piene}

\Address{$^{\rm a)}$~Room 2-172, Department of Mathematics, MIT,\\
\hphantom{$^{\rm a)}$}~77 Massachusetts Avenue, Cambridge, MA 02139, USA}
\EmailD{\href{mailto:kleiman@math.mit.edu}{kleiman@math.mit.edu}}

\Address{$^{\rm b)}$~Department of Mathematics, University of Oslo,\\
\hphantom{$^{\rm b)}$}~PO Box 1053, Blindern, NO-0316 Oslo, Norway}
\EmailD{\href{mailto:ragnip@math.uio.no}{ragnip@math.uio.no}}

\ArticleDates{Received February 24, 2022, in final form July 28, 2022; Published online August 02, 2022}

\Abstract{We complete the proof of a theorem we announced and partly proved in [\textit{Math. Nachr.} \textbf{271} (2004), 69--90, math.AG/0111299]. The theorem concerns a family of curves on a~family of surfaces. It has two parts. The first was proved in that paper. It describes a natural cycle that enumerates the curves in the family with precisely $r$ ordinary nodes. The second part is proved here. It asserts that, for $r\le 8$, the class of this cycle is given by a~computable universal polynomial in the pushdowns to the parameter space of products of the Chern classes of the family.}

\Keywords{enumerative geometry; nodal curves; nodal polynomials; Bell polynomials; Enriques diagrams; Hilbert schemes}

\Classification{14N10; 14C20; 14H40; 14K05}

\begin{flushright}
\it Happy 60th, Lothar
\end{flushright}

\renewcommand{\thefootnote}{\arabic{footnote}}
\setcounter{footnote}{0}

\section{Introduction}

This paper is the fourth in a series about enumerating nodal curves on
smooth complex surfaces. Here we complete the proof of Theorem~2.5 on p.~74 in~\cite{K--P}. It has two parts. The first was proved in~\cite{K--P}. It describes a natural cycle $U(r)$ on the parameter space
of a family of pairs of a~surface and of a curve on it; $U(r)$
enumerates the curves with precisely $r$ ordinary nodes. The second part
is proved here. It asserts that, for $r\le 8$, the class $[U(r)]$ is
given by a computable universal polynomial in the pushdowns of products
of the Chern classes of the family.

The second part was not proved in \cite{K--P}, because we believed our
approach, inspired by Vainsencher's paper~\cite{IV}, would eventually
yield an algorithm for computing the entire polynomial for $[U(r)]$ not
only for $r \le 7$, but also for $r = 8$ and perhaps for all~$r$. So we
chose to publish only the construction of $U(r)$ and to postpone the
rest. Unfortunately, we were too optimistic. Thus here we work out an
ad hoc determination of the polynomial for $r=8$; specifically, we show
that the ``correction term'' is independent of the family, and so can be
found by working out a particular example, such as we did in
\cite[Example~3.8, p.~80]{K--P}.

In \cite[Remark~2.7, p.~74]{K--P} we conjectured that the class
$[U(r)]$ is given for all $r$ by a~universal polynomial in certain
classes $y(a,b,c)$, defined here in Section~\ref{sbs:derived}, which
are pushdowns of products of the (relative) Chern classes of the family.
Moreover, this polynomial should be of the form $P_r(a_1,\dots,a_r)/r!$,
where $P_r(a_1,\dots,a_r)$ is the $r$th (complete) Bell polynomial and
the~$a_i$ are linear polynomials in the $y(a,b,c)$.

G\"ottsche \cite{gottsche} had already conjectured the special case
where the pairs consist of a fixed surface and of the divisors in a
linear system. This celebrated conjecture was proved independently by
Tzeng \cite{Tzeng} and Kool--Shende--Thomas~\cite{K-S-T}. For the
history of the case of plane curves, see \cite[Remark~3.7, p.~78]{K--P}
and the more recent~\cite{Lucia}.

A part of our conjecture has now been proved by Laarakker
\cite[Theorem~A, p.~4921]{Laarakker}. He~defined a cycle $\gamma(r)$
which, under suitable genericity assumptions on the family, is supported
on~$U(r)$, and its class $[\gamma(r)]$ is given by a universal
polynomial in the $y(a,b,c)$. Although he did not prove that the
polynomial is Bell (see his footnote \cite[p.~4918]{Laarakker}), he
did prove that $[\gamma(r)]$ is ``multiplicative'' when the family of
surfaces is a direct sum of families over the same base (see
\cite[Lemma~5.5 and Remark~5.6, p.~4936]{Laarakker}). When the family
is trivial, G\"ottsche had observed that this multiplicative property
implies the polynomial is Bell.
However, when the family is
nontrivial, the multiplicative property is insufficient.

In \cite{K--P} we applied our theorem in several enumerations involving
nontrivial families of surfaces, including the family of all planes in
$\mathbb P^4$. In \cite[Theorem~B, p.~4922]{Laarakker} Laarakker proved
that the number of $r$-nodal plane curves of degree $d$ in $\mathbb P^3$
meeting the appropriate number of general lines, is given by a universal
polynomial in $d$ of degree $\le 9+2r$. Moreover, he explicitly computed
the polynomial for $r\le 12$. In~\cite{RitwikPaulSingh} Mukherjee,
Paul, and Singh did the same; they obtained a recursive formula, and
verified that their results agree with Laarakker's. In~\cite{DasRitwik}
Das and Mukherjee treated the case where the curves may have one
additional nonnodal singularity. In~\cite{RitwikSingh} Mukherjee and
Singh did the same for rational curves.

In \cite[Remark~2.7, p.~74]{K--P} we conjectured that universal
polynomials also enumerate curves with any given equisingularity type.
In \cite[Theorem~10.1, p.~713]{Kazaryan} Kazaryan gave a ``topological
justification'' of our conjecture, but gave no algebraic proof. He
worked with a linear system on a fixed surface, and found several
explicit formulas for curves with singularities of codimen\-sion~$\le 7$.
A few of these formulas had been given in \cite[Theorem~1.2, p.~210]{KP99}. In~\cite{BasuRitwik} Basu and Mukherjee gave recursive
formulas for the number of curves in a linear system on a fixed surface
that have $r$ nodes and one additional singularity of codimension $\le
8-r$. In particular, their formula for $8$-nodal curves recovers ours in
this case; see \cite[Theorem~1.1, p.~210]{KP99}.

In \cite{LiTzeng} Li and Tzeng and, independently in~\cite{Rennemo},
Rennemo proved the existence of universal polynomials enumerating
divisors with isolated singularities of given topological or analytical
types in a trivial family of varieties of arbitrary dimension.

In short, we work here over an algebraically closed field of
characteristic $0$ with pairs $(F/Y,D)$, where $Y$ is a Cohen--Macaulay
algebraic scheme, $F/Y$ is a smooth projective family of surfaces, and
$D$ is a relative, or $Y$-flat, effective divisor on $F$. We let $\pi\colon
F \to Y$ denote the structure map.

 In Section~\ref{sc:IndFam}, given a pair $(F/Y,D)$, we recall from
\cite[pp.~226--227]{KP99} the construction and elementary properties
of its induced pairs $(F_i/X_i,D_i)$. Then we prove some further
properties. Intuitively, $(F_i/X_i,D_i)$ represents a family of curves
that sit on blowups of the surfaces of $F/Y$ and that have one less
$i$-fold point.

In Section~\ref{sc:virtual}, the main results are Lemmas~\ref{lemZiso1}
and~\ref{lemZiso2}, which concern properties of certain subschemes
of the relative Hilbert scheme $\Hilb_{D/Y}^{3r}$.
 In Section~\ref{sec:inters}, we develop some results of bivariant
intersection theory for use in the subsequent sections. Our treatment
here generalizes and improves our shorter one in~\cite{KP99}.

In Section~\ref{sc:thm}, we state the main theorem, Theorem~\ref{th:5-1}.
Then we prove a key recursion relation; we prove the theorem for $r\le
7$; and we explain what more is needed for $r=8$. The difficulty is
that the induced pair $(F_2/X_2,D_2)$ does not satisfy the hypotheses of
the theorem, as $D_2/X_2$ has nonreduced fibers in codimension $7$
above the relative quadruple-point locus~$X_4$ of~$D/Y$.

 Therefore, the recursion that works for $r\le 7$ must be corrected
accordingly. In Section~\ref{sc:corr} we find an expression for the
correction term, and in Section~\ref{sc:indep} we prove that the
correction term is equal to $C[X_4]$ for some integer $C$ that is
independent of the given $(F/Y,D)$. Our proof illustrates the advantage
of developing intersection theory over any universally catenary
Noetherian base. Thus, to complete the proof of the theorem, it
suffices to compute the integer~$C$ in a particular case, such as that
of $8$-nodal quintic plane curves, which we did in
\cite[Example~3.8, p.~80]{K--P}.

However, our proof requires an additional genericity hypothesis: the
analytic type of a fiber of~$D$ at an ordinary quadruple point must not
remain constant along any irreducible component of~$X_4$. This
hypothesis comes into play at just one spot in the proof of Lemma~\ref{lm:versal} to ensure a~certain map is flat. We believe that Lemma~\ref{lm:versal} and Theorem~\ref{th:5-1} hold without this hypothesis.
At any rate, the hypothesis is usually fulfilled in practice.

\section{The induced pairs}\label{sc:IndFam}
 The induced pairs $(F_i/X_i,D_i)$ of a given pair $(F/Y,D)$ play
a central role in the present work. So, in this section, we recall the
theory and develop it further. Here $F$ and $Y$ need only be
Noetherian, and $F/Y$ need only be of finite type.

\begin{sbs}{}{\bf The induced pairs.}\label{sbsIndTr}
 From \cite[pp.~226--227]{KP99}, let's recall the construction and
elementary properties of the induced pairs, but make a few minor changes
appropriate for the present work.

Denote by $p_j\colon F\x_Y F\to F$ the $j$th projection, by $\Delta \subset
F\x_Y F$ the diagonal subscheme, and by $\mc I_\Delta$ its ideal. Say
$D$ is defined by the global section $\sigma$ of the invertible sheaf
$\mc O_F(D)$. Then $\sigma$ induces a section $\sigma_i$ of the sheaf
of relative twisted principal parts,
 \begin{equation}\label{eqdfpp}
 \mc P_{F/Y}^{i-1}(D)
 :=p_{1*}\bigl(p_2^*\mc O_F(D)\big/\mc I_\Delta^i\bigr)
\qquad \text{for}\quad i\ge1.
\end{equation}
Take the scheme of zeros of $\sigma_i$ to be $X_i$, and set $X_0:=F$.

Then $X_1=D$. Further, a geometric point of $X_i$, that is, a map
$\xi\colon \Spec(K)\to X_i$, where~$K$ is an algebraically closed field, is
just a geometric point $\xi$ of $F$ at which the fiber $D_{\pi(\xi)}$ has
multiplicity at least $i$. Also, as $i$ varies, the $X_i$ form a
descending chain of closed subschemes.

The sheaf $\mc P_{F/Y}^{i-1}(D)$ fits into the exact sequence,
\begin{equation*}
 0\to\cSym^{i-1}\Omega^1_{F/Y}(D)\to\mc P_{F/Y}^{i-1}(D)
 \to\mc P_{F/Y}^{i-2}(D)\to0,
 \end{equation*}
{\sloppy where the first term is the symmetric power of the sheaf of relative
differentials, twisted by~$\mc O_F(D)$. Hence $\mc
P_{F/Y}^{i-1}(D)$ is locally free of rank $\binom{i+1}2$ by induction on
$i$. Therefore, at each scheme point $x\in X_i$, we have
\begin{equation}\label{eq61a}
\cod_x(X_{i},F)\le\binom{i+1}{2},
\end{equation}}\noindent
where, as usual, $\cod_x(X_{i},F)$ stands for the minimum
$\min(\dim\mc O_{F,\eta})$ as $\eta$ ranges over the generizations of
$x$ in $X_{i}$. If $\cod_x X_{i}=\binom{i+1}2$ and if $Y$ is
Cohen--Macaulay
at $\pi (x)$, then, since $F/Y$ is smooth, $X_{i}$ is a local complete
intersection in $F$ at $x$, and is Cohen--Macaulay
at $x$.

Denote by $\beta\colon F'\to F\x_YF$ the blowup
along $\Delta$, and by $E$
the exceptional divisor. Set $\vf':=p_1\beta$ and $\pi':=p_2\beta$.
Then $\pi'\colon F'\to F$ is again a smooth family of surfaces, and
projective if $\pi$ is; in fact, over a point $\xi$ of $F$, the fiber
$F'_\xi:= \pi'^{-1}(\xi)$ is just the blowup (via $p_1)$ of the fiber
$F_{\pi (\xi)}:= \pi^{-1}\pi (\xi)$ at $\xi$. For each $i$, set
$F_i:={\pi'}^{-1}(X_i)$, and denote by $\pi_i\colon F_i\to X_i$ the
restriction of $\pi'$. In sum, we have this diagram:
\[
\xymatrix{
F\ar[d]_{\pi} & \ar[l]_{\!\!p_1} \ar[d]_{p_2} F\times_Y F & \ar[l]_{\quad \ \beta} \ar[d]_{\pi'} F' & \ar@{_{(}->}[l]_{\ \beta'_i} \ar[d]_{\pi_i} F_i\\
Y & \ar[l]_{\pi} F & \ar@{=}[l] F & \ar@{_{(}->}[l]_{\ \beta_i}X_i.
}
\]

 In addition, given $r\ge1$, set
\begin{equation}\label{eq61b}
 r_i:=r-{\binom{i+1}2}+2,\qquad
 D'_i:={\vf'}^{-1}D-iE\qquad \text{and}\qquad
 D_i:=D'_i\big|_{F_i}.
\end{equation}
As $F'$ has no associated points on $E$, the subscheme ${\vf'}^{-1}D$
is an effective divisor; so $D'_i$ is a~divisor on $F'$. If $i\ge1$,
then
\begin{equation*}
 D'_i=D'_{i-1}-E.
\end{equation*}
 In \cite[p.~227]{KP99}, we proved the second assertion of the next
lemma. Taking a little more care, we now prove the first too. Later,
in Lemma~\ref{lm:4-3}, we relate $X_i$ and~$r_i$.
 \end{sbs}

\begin{lem}\label{lm:4-2}
For each $i\ge1$, the subscheme $X_i$ of $F$ is the largest subscheme
over which $D'_i$ is effective. Furthermore, $D_i:=D'_i\big|_{F_i}$ is
relative effective on $F_i/X_i$.
\end{lem}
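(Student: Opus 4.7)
The plan is to interpret effectiveness of $D'_i$ as the vanishing of a section of a locally free sheaf on $F$, and then to match that sheaf and section with $\mc P^{i-1}_{F/Y}(D)$ and $\sigma_i$, thereby identifying $X_i = Z(\sigma_i)$ as the universal largest locus of effectiveness. I would start from the short exact sequence on $F'$,
\begin{equation*}
0 \to \vf'^*\mc O_F(D)(-iE) \to \vf'^*\mc O_F(D) \to \mc Q_i \to 0,\qquad \mc Q_i := \vf'^*\mc O_F(D)\ox\mc O_{iE},
\end{equation*}
in which the first map is multiplication by the canonical section of $\mc O_{F'}(iE)$. The pullback $\tau := \vf'^*\sigma$ of the defining section of $D$ maps to an element $\bar\tau \in \Gamma(F',\mc Q_i)$. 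The condition ``$D'_i$ is effective over a subscheme $Z \subset F$'' means $\tau|_{\pi'^{-1}Z}$ lifts through $\vf'^*\mc O_F(D)(-iE)|_{\pi'^{-1}Z}$, which is exactly the vanishing of $\bar\tau|_{\pi'^{-1}Z}$.

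Next I would push down via $\pi'$ and identify $\pi'_*\mc Q_i$ with $\mc P^{i-1}_{F/Y}(D)$. The projection formula for $\beta$, combined with $\beta_*\mc O_{F'}(-iE) = \mc I_\Delta^i$ and vanishing of higher direct images for $i\ge 0$ (coming from the $\bb P^1$-bundle structure of $E$ over $\Delta$ and $H^{\ge 1}(\bb P^1,\mc O(j))=0$ for $j\ge -1$), gives $\pi'_*\mc Q_i \cong p_{2*}(p_1^*\mc O_F(D)/\mc I_\Delta^i)$. The factor-swap involution of $F\x_Y F$ (which preserves $\mc I_\Delta$) then canonically identifies this with $\mc P^{i-1}_{F/Y}(D)$, and under that identification $\pi'_*\bar\tau$ corresponds to $\sigma_i$. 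Moreover, the filtration of $\mc Q_i$ by powers of $E$ has subquotients which on the $\bb P^1$-fibers restrict to $\mc O_{\bb P^1}(j)$ with $0\le j\le i-1$, so $R^{\ge 1}\pi'_*\mc Q_i = 0$; hence $\pi'_*\mc Q_i$ is locally free of rank $\binom{i+1}{2}$, and its formation commutes with arbitrary base change $T\to F$.

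Combining these, for any $T\to F$ the conditions $\sigma_i|_T=0$ and $\bar\tau|_{F'_T}=0$ (with $F'_T := F'\x_F T$) are equivalent, so by the universal property of the zero scheme $X_i = Z(\sigma_i)$ is the largest subscheme of $F$ over which $D'_i$ is effective, proving the first assertion. For the second, I specialize to $T=X_i$: the lift defines an effective divisor $D_i = D'_i|_{F_i}$ on $F_i$, and since $\tau$ is a non-zerodivisor on $F'$ (as $F'$ has no associated points on $E$, so $\vf'^{-1}D$ is Cartier) the lift remains a non-zerodivisor on every geometric fiber $F_i|_\xi$, the blowup of $F_{\pi(\xi)}$ at $\xi$, where it cuts out $(m_\xi - i)E_\xi + \wt{D'}_{\pi(\xi)}$, a non-trivial effective Cartier divisor on the smooth surface fiber (the strict transform is non-empty because $D_{\pi(\xi)}$ is a non-zero effective Cartier divisor on $F_{\pi(\xi)}$, and $m_\xi\ge i$ on $X_i$). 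Since $F_i/X_i$ is smooth of relative dimension $2$, flat plus fiberwise Cartier then forces $D_i$ itself to be flat, that is, relatively effective, over $X_i$.

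The main obstacle is the section-level matching in the middle paragraph: the sheaf isomorphism $\pi'_*\mc Q_i \cong \mc P^{i-1}_{F/Y}(D)$ is formal once the factor-swap is in place, but confirming that $\pi'_*\bar\tau$ really corresponds to $\sigma_i$ requires the explicit computation $\beta_*\bar\tau = p_1^*\sigma \bmod \mc I^i_\Delta$ together with the swap. As a fallback, the whole matching can be verified directly in blowup charts by expanding $f(x^{(1)},y) = f(x^{(2)}-u,y)\bmod u^i$ and observing that the coefficients are precisely the jets defining $\sigma_i$.
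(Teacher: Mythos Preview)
Your argument is correct and reaches the same conclusion as the paper's, but it is organized differently. The paper works pointwise with a test map $t\colon T\to F$: it pulls back to $F\x_Y T$, identifies the graph $\Gamma$ of $t$ as $(1\x t)^{-1}\Delta$, observes that $F'_T:=F'\x_FT$ is the blowup of $F\x_Y T$ along $\Gamma$, and then uses the isomorphism $I_\Gamma^i\risom(\beta_T)_*\mc O_{F'_T}(-iE_T)$ (valid because $\Gamma$ is a local complete intersection) together with the projection formula to translate ``$q^*\sigma$ factors through $\mc I_\Gamma^i\mc O_{F\x_Y T}(q^*D)$'' into ``$\tau^*D'_i$ is effective.'' You instead work globally on $F'$: you introduce the quotient $\mc Q_i$, push down by $\pi'$, identify $\pi'_*\mc Q_i$ with $\mc P^{i-1}_{F/Y}(D)$ via the factor swap (using $\vf'|_E=\pi'|_E$ implicitly), and then invoke the vanishing $R^{\ge1}\pi'_*\mc Q_i=0$ so that cohomology-and-base-change makes $\sigma_i|_T=0$ equivalent to $\bar\tau|_{F'_T}=0$. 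Both routes ultimately rest on $\beta_*\mc O_{F'}(-iE)=\mc I_\Delta^i$; the paper's is more hands-on with the test scheme and avoids any higher-direct-image computation, whereas yours packages the whole equivalence into a single sheaf identification on $F$ plus base change, which is tidier once the $\bb P^1$-bundle vanishing is established. For the second assertion the two proofs coincide: fiberwise effectiveness plus flatness of $\pi_i$ gives relative effectiveness; your explicit fiber description $(m_\xi-i)E_\xi+\wt D_{\pi(\xi)}$ just makes the non-zerodivisor check on the smooth surface fibers concrete, whereas the paper states this step in one line.
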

\begin{proof}
By definition of $X_i$, a $Y$-map $t\colon T\to F$ factors through $X_i$ iff
$t^*\sigma_i=0$. Now, $\mc P_{F/Y}^{i-1}(D)$ is locally free on $F$, so
flat over $Y$; hence, $(1\x t)^*\mc I_\Delta^i\mc O_{F\x_Y F}(p_1^*D)$ is
a subsheaf of $(1\x t)^*\mc O_{F\x_Y F}(p_1^*D)$ owing to display
(\ref{eqdfpp}). Therefore, $t^*\sigma_i=0$ iff $(1\x t)^*p_1^*
\sigma\colon \mc O_{F\x_Y T}\to(1\x t)^*\mc O_{F\x_Y F}(p_1^*D)$ factors
through that subsheaf.

Let $q\colon F\x_Y T\to F$ denote the projection. Then $q=p_1(1\x t)$. So
\[(1\x t)^*\mc O_{F\x_Y F}(p_1^*D) = \mc O_{F\x_Y T}(q^*D).\]
Let $\Gamma\subset F\x_Y T$ be the graph subscheme of $t$, and $\mc
I_\Gamma$ its ideal. Then $(1\x t)^{-1}\Delta=\Gamma$. Hence $(1\x
t)^*\mc I_\Delta^i = \mc I_\Gamma^i$.
Therefore, $t^*\sigma_i=0$ iff $q^* \sigma\colon \mc O_{F\x_Y T}\to\mc
O_{F\x_Y T}(q^*D)$ factors through $\mc I_\Gamma^i\mc O_{F\x_Y T}
(q^*D)$.

Set $F'_T:=F'\x_FT$ and $\beta_T:=\beta\x_FT$. Then $\beta_T\colon F'_T\to
F\x_YT$ is the blowup of $F\x_Y T$ along $\Gamma$ as $(1\x t)^*\mc
I_\Delta^i = \mc I_\Gamma^i$. Set $E_T:=E\x_FT$. Then $E_T$ is the
exceptional divisor. Trivially, $I_\Gamma^i \mc O_{F'_T}=\mc
O_{F'_T}(-iE_T)$. However, $I_\Gamma^i\risom(\beta_T)_*\mc
O_{F'_T}(-iE_T)$ since $\Gamma$ is a local complete intersection;
see
\cite[display~(6), p.~601]{EGK}; so the projection formula yields
\[
I_\Gamma^i \mc O_{F\x_Y T}(q^*D)
 =\beta_{T*}\mc O_{F'_T}(\beta_T^*q^*D-iE_T).\]
Set $\vf'_T:=q\beta_T$. Then, therefore, $t^*\sigma_i=0$ iff
$\vf_T^{\prime*}\sigma\colon \mc O_{F'_T}\to\mc O_{F'_T}(\vf_T^{\prime*}D)$ factors
through $\mc O_{F'_T}(\vf_T^{\prime*} D-iE_T)$.

Let $\tau\colon F'_T\to F'$ denote the projection. Then $\vf_T^{\prime*}D-iE_T=
\tau^*D'_i$. Therefore, $t^*\sigma_i=0$ iff $\tau^*D'_i$ is
effective. Thus $X_i$ is the largest subscheme of $F$ over which $D'_i$
is effective.

In particular, on every fiber of $\pi_i$, the restriction of $D_i$ is
effective. Furthermore, $\pi_i$ is flat. Hence, $D_i$ is relative
effective. Thus the lemma holds.
\end{proof}

\begin{lem}
 Let $(F/Y,D)$ be a pair. Then forming all of the induced pairs
$(F_i/X_i,D_i)$ commutes with arbitrary base change $g\colon Y'\to Y$.
\end{lem}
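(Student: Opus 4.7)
The plan is to trace through the construction of the induced pair $(F_i/X_i, D_i)$ and verify that each stage commutes with the base change $g\colon Y'\to Y$. Write $F_{Y'}:= F\x_Y Y'$ for the base-changed family of surfaces, $D_{Y'}$ for the pullback of $D$, and $g_F\colon F_{Y'}\to F$ for the natural projection.

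First I would check that the formation of the principal parts sheaf $\mc P_{F/Y}^{i-1}(D)$ commutes with base change, that is,
\[
\mc P_{F_{Y'}/Y'}^{i-1}(D_{Y'}) \cong g_F^*\mc P_{F/Y}^{i-1}(D).
\]
This follows from flat base change applied to $p_{1*}$, which is legitimate since $p_1\colon F\x_Y F\to F$ is smooth, combined with the fact that $\Delta$ is regularly embedded in $F\x_Y F$ (so the quotient $p_2^*\mc O_F(D)/\mc I_\Delta^i$ behaves well under pullback). Under this isomorphism the section $\sigma_i$ pulls back to its analogue for the base-changed pair, so the zero scheme $X_i$ pulls back to the corresponding $X_i$ of $(F_{Y'}/Y', D_{Y'})$.

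Second, I would verify that the blowup $\beta\colon F'\to F\x_Y F$ commutes with base change. Blowups do not commute with arbitrary base change in general, but here the center $\Delta$ is regularly embedded (because $F/Y$ is smooth), and blowups along regular embeddings do commute with arbitrary base change; this is the same principle invoked in the proof of Lemma~\ref{lm:4-2} via \cite[display~(6), p.~601]{EGK}. Consequently the base change of $F'$ is canonically the blowup of $F_{Y'}\x_{Y'}F_{Y'}$ along its diagonal, and the exceptional divisor $E$ pulls back to the exceptional divisor of this blowup.

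Given these compatibilities, the remaining identities are formal. The map $\pi'\colon F'\to F$ and the divisor $D'_i={\vf'}^{-1}D-iE$ pull back to their analogues on the base-changed pair, so $F_i:={\pi'}^{-1}(X_i)$ and $D_i=D'_i|_{F_i}$ do as well. The main obstacle is the blowup compatibility; once that standard fact about regular embeddings is invoked, every other step is a routine application of the universal properties of pullback and zero schemes.
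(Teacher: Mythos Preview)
Your proposal is correct and follows essentially the same approach as the paper: the paper likewise reduces the lemma to (i) base-change compatibility of the principal parts sheaf and its canonical section (citing \cite[Proposition~16.4.5]{EGAIV4}), and (ii) base-change compatibility of the blowup $F'$ and its exceptional divisor (citing \cite[Proposition~3.4]{KP09}), after which the rest is formal. You supply direct arguments for these two compatibilities where the paper simply cites references, but the logical structure is the same.
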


\begin{proof} It follows from \cite[Proposition~3.4, p.~422]{KP09} that the
formation of $F^{(1)}$ and $E^{(1)}$ commutes with base change. Set
$g'\colon F\times_YY'\to F$.
 By \cite[Proposition~16.4.5, p.~19]{EGAIV4}, we have ${g'}^*\mc
P^{i-1}_{F/Y}(D)=\mc P^{i-1}_{F\times_YY'/Y'}\big({g'}^{-1}(D)\big)$, and the section
$\sigma_i$ pulls back to the corresponding section~$\sigma'_i$. Hence
the zero scheme of $\sigma'_i$ is equal to $X_i\times_YY'$.
\end{proof}

\begin{dfn}
Let $Y(\infty)$ denote the subset of $Y$ whose geometric points are those
$\eta$ of $Y$ whose fiber $D_\eta$ is not reduced.

Fix a minimal Enriques diagram $\I D$; see \cite[Section~2, p.~213]{KP99}. Denote by $Y(\I D)$ the subset of $Y$ whose geometric points
are those $\eta$ whose fiber $D_\eta$ has diagram~$\I D$.
 \end{dfn}

\begin{sbs}{}{\bf Arbitrarily near points.} 
 Recall the following notions, notation, and results. First, as in
\cite[Definition~3.1, p.~421]{KP09}, for $j\ge 0$, iterate the construction
of $\pi'\colon F'\to F$ from $\pi\colon F\to Y$ to obtain $\pi^{(j)}\colon F^{(j)}\to
F^{(j-1)}$ with $\pi^{(0)} := \pi$, with $\pi^{(1)} := \pi'$, and so
forth. By~\cite[Proposition~3.4, p.~422]{KP09}, the $Y$-schemes $F^{(j)}$
represent the functors of arbitrarily near points of $F/Y$; the latter
are defined in \cite[Definition~3.3, p.~422]{KP09}. As in
\cite[Definition~3.1, p.~421]{KP09}, we denote by
$\varphi^{(j)}\colon F^{(j)}\to F^{(j-1)}$ the map equal to the composition of the blowup
and the first projection, and by $E^{(j)}\subset F^{(j)}$ the exceptional divisor.

Given a minimal Enriques diagram $\I D$ on $j+1$ vertices, fix an
ordering $\theta$ of these vertices. Also, let $\I U$ be the unweighted diagram
underlying $\I D$. By~\cite[Theorem~3.10, p.~425]{KP09}, the functor of
arbitrarily near points with $(\I U,\theta)$ as associated diagram is
representable by a $Y$-smooth subscheme $F(\I U,\theta)$ of $F^{(j)}$.

By \cite[Corollary~4.4, p.~430]{KP09}, the group of automorphisms $\Aut(\I U)$
acts freely on $F(\I U, \theta)$. So its subgroup $\Aut(\I D)$, of
automorphisms of $\I D$, does too.
Set
 \begin{equation*}\label{eqYDa}
 Q(\I D):=F(\I U, \theta) \big/{\Aut}(\I D);
\end{equation*}
 it is independent of the choice of $\theta$ by \cite[Theorem~5.7, p.~438]{KP09}.
Set $d:=\deg\I D$.

Form the structure map and the universal injection
of \cite[Theorem~5.7, p.~438]{KP09}:
 \begin{equation*}\label{eqYDb}
 q\colon\ Q(\I D)\to Y \qquad \text{and} \qquad \Psi\colon\ Q(\I D)\to\Hilb^d_{F/Y};
 \end{equation*}
 in fact, $\Psi$ is a an embedding in characteristic~0. The
construction and study of $\Psi$ is based on the modern theory of
complete ideals. Finally, set
 \begin{equation}\label{eqYDc}
 G(\I D):=\Hilb^d_{D/Y}\x_{\Hilb^d_{F/Y}}Q(\I D).
 \end{equation}
 \end{sbs}

\begin{lem}\label{lemCnstr}
 The sets $Y(\I D)$ and $Y(\infty)$ are constructible; in fact,
$Y(\infty)$ is closed if $F/Y$ is proper. Furthermore, for all $z\in
G(\I D)$ and $y\in Y(\I D)$, we have
 \begin{equation}\label{eq64a}
 \cod_z(G(\I D),Q(\I D))\le d \qquad \text{and} \qquad
	 \cod_y(Y(\I D),Y)\le \cod\I D.
 \end{equation}
 Finally, for only finitely many $\I D$, is either $G(\I
D)\smallsetminus q^{-1}Y(\infty)$ or $Y(\I D)$ nonempty.
\end{lem}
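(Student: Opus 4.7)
\emph{Constructibility.} Since $D/Y$ is flat of finite type, the locus of $y\in Y$ with geometrically reduced fiber is constructible, and open when $D/Y$ is proper, by standard results on fiber properties in flat families of finite type; its complement is $Y(\infty)$. For $Y(\I D)$, I would iterate the construction of $\pi'\colon F'\to F$ to obtain the tower $F^{(j)}\to\cdots\to F^{(0)}=F\to Y$ and express ``$D_y$ realizes $\I D$'' as a conjunction of prescribed multiplicities at successive arbitrarily near points (of the $X_i$-type), combined with negations ruling out higher multiplicities or extra infinitely near vertices. This defines a locally closed subset of the $Y$-smooth scheme $F(\I U,\theta)$; its image $Y(\I D)\subset Y$ is constructible by Chevalley's theorem.

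\emph{Codimension bounds.} For $\cod_z(G(\I D),Q(\I D))\le d$: on $\Hilb^d_{F/Y}$ the universal length-$d$ subscheme $\mc Z$ gives a locally free rank-$d$ sheaf $p_*\bigl(\mc O_{\mc Z}\ox\mc O_F(D)\bigr)$, and the section $\sigma$ defining $D$ induces a section of this sheaf whose zero scheme is $\Hilb^d_{D/Y}$; this presents $\Hilb^d_{D/Y}$ as the zero scheme of a rank-$d$ bundle section, hence of codimension $\le d$, and pulling back via $\Psi$ yields the bound for $G(\I D)\subset Q(\I D)$. For $\cod_y(Y(\I D),Y)\le\cod\I D$: fix $y\in Y(\I D)$, pick $z\in G(\I D)$ over $y$ in the locally closed substratum realizing exactly $\I D$, and let $W$ be an irreducible component of $G(\I D)$ through $z$. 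The first bound gives $\dim_zW\ge\dim_zQ(\I D)-d$; combined with the formula relating the relative dimension of $Q(\I D)/Y$ to $d$ and $\cod\I D$ coming from the tower construction of \cite{KP09}, this yields $\dim_zW\ge\dim Y-\cod\I D$. Since $G(\I D)\to Y$ has finite fibers over $Y(\I D)$ (a fixed reduced curve admits only finitely many length-$d$ configurations of type $\I D$), the image of $W\cap q^{-1}Y(\I D)$ has codimension $\le\cod\I D$ in $Y$ at $y$.

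\emph{Finiteness.} The arithmetic genus $p_a(D_y)$ is locally constant on the Noetherian $Y$, hence takes finitely many values. For reduced $D_y$, each Enriques diagram $\I D$ arising among its singularities satisfies the $\delta$-invariant bound $\sum_v\binom{\Wt(v)}{2}\le p_a(D_y)$, which simultaneously constrains the vertex weights and the number of vertices; so only finitely many $\I D$ can occur. This handles both $Y(\I D)\ne\varnothing$ and $G(\I D)\setminus q^{-1}Y(\infty)\ne\varnothing$, the latter because any such $z$ lies over a reduced fiber whose singular configuration bounds $\I D$.

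\textbf{The main obstacle} will be the second codimension bound: the image of $W\subset G(\I D)$ in $Y$ may contain points in strictly more singular strata $Y(\I D')$, so I must work locally near $z$ in the exactly-$\I D$ substratum of $G(\I D)$ (from the constructibility analysis) to guarantee that a dense open of $W$ near $z$ maps into $Y(\I D)$ itself.
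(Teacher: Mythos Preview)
Your plan is broadly parallel to the paper's, with the same argument for the first codimension bound and essentially the same dimension count for the second. There are two substantive differences worth noting.

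\textbf{Constructibility of $Y(\I D)$ and finiteness.} The paper reverses your order: it first establishes (citing \cite[Lemma~2.4]{K--P}) that only finitely many $\I D$ have $Y(\I D)\ne\varnothing$; call this set $\Sigma$. It then derives constructibility from the explicit identity
\[
 Y(\I D) = q(G(\I D))\smallsetminus\Bigl(Y(\infty)\cup
 {\textstyle\bigcup}\bigl\{\,q(G(\I D'))\mid \I D'\in\Sigma,\ \deg\I D'>d\,\bigr\}\Bigr),
\]
using the observation that any $\eta\in q(G(\I D))\smallsetminus Y(\infty)$ admits a weight-nondecreasing injection $\I D\hookrightarrow\I D_\eta$ into the diagram of $D_{\bar\eta}$, with $\deg\I D_\eta>d$ iff $\eta\notin Y(\I D)$. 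This identity also yields the finiteness statement for $G(\I D)\smallsetminus q^{-1}Y(\infty)$ directly: any point there gives an injection $\I D\hookrightarrow\I D'$ with $\I D'\in\Sigma$, and only finitely many $\I D$ inject into members of the finite set $\Sigma$. Your tower-of-multiplicities plan is vaguer precisely at the step ``negations ruling out \dots\ extra infinitely near vertices''; the paper's formula packages this negation as subtraction of the finitely many $q(G(\I D'))$. Note also that your arithmetic-genus argument for finiteness presupposes $D/Y$ proper, whereas the lemma is stated in the generality of Section~\ref{sc:IndFam} where $F/Y$ is only of finite type.

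\textbf{Second codimension bound.} Here the paper does not argue via a component $W$ and its image; it simply asserts the equality
\[
 \cod_y(Y(\I D),Y)=\cod_z(G(\I D),Q(\I D))-\dim\I D,
\]
appealing only to the uniqueness of $z$ over $y$ and the smoothness of $Q(\I D)/Y$ of relative dimension $\dim\I D$. Your ``main obstacle''---that a component of $G(\I D)$ through $z$ might have generic point lying over a strictly more singular stratum rather than over $Y(\I D)$---is exactly what one must rule out to justify the $\le$ direction of this equality. The paper does not spell out this step. So your identification of the obstacle is apt; the paper records the equality and moves on.
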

\begin{proof} Note that $Y(\infty)$ is just the image in $Y$ of the set
of $x\in X_2$ at which the fiber of $X_2/Y $ is of dimension at least 1.
This set is closed in $X_2$, so in $F$. Hence $Y(\infty)$ is
constructible; in fact,~$Y(\infty)$ is closed if $\pi$ is proper.

Only finitely many $\I D$ arise from the fibers of $D/Y$; indeed, this
statement is proved in \cite[Lemma~2.4, p.~73]{K--P} without making use
of its blanket hypothesis that $Y$ is Cohen--Macaulay and of finite type
over the complex numbers; that proof just requires $Y$ to be Noetherian.
Thus there are only finitely many $\I D$ such that $Y(\I D)$ is
nonempty; denote the set of these $\I D$ by $\Sigma$.

The subscheme $\Hilb^d_{D/Y}$ of $\Hilb^d_{F/Y}$ is locally cut out by
$d$ equations by \cite[Proposition~4, p.~5]{AIK}. Therefore, the first
bound holds in~(\ref{eq64a}).

The definitions yield $q(G(\I D))\supset Y(\I D)$. Further, take any
$y\in q(G(\I D)) \smallsetminus Y(\infty)$, and let~$\I D'$ be the diagram of
$D_K$, where $K$ is the algebraic closure of $k(y)$. Then the
definitions yield a natural injection $\alpha\colon \I D\into\I D'$ such
that each $V\in\I D$ has weight at most that of $\alpha (V)$.
So~$\deg\I D'>d$ if $y\notin Y(\I D)$. Hence
 \[
 Y(\I D) = q(G(\I D))\smallsetminus \bigl(Y(\infty)\cup\big({\textstyle \bigcup}
 \{q(G(\I D'))\mid \I D'\in\Sigma\text{ and } \deg\I D'>d\}\big)\bigr).
 \]
 But $G(\I D)$ and the $G(\I D')$ are locally closed. Thus $Y(\I D)$ is
constructible.

To prove the second bound in (\ref{eq64a}), note that $G(\I D)$ has a
unique point, $z$ say, lying over the given $y$. Now, $Q(\I D)/Y$ is
smooth of relative dimension $\dim\I D$ by \cite[Theorem~3.10, p.~425]{KP09}.
Thus, as desired,
 \[
 \cod_y(Y(\I D),Y)=\cod_z(G(\I D),Q(\I D))-\dim\I D \le d-\dim \I D = \cod \I D.
 \]

Finally, suppose $G(\I D)\smallsetminus h^{-1}Y(\infty)$ is nonempty.
Then, as we have just seen, there is an injection $\alpha\colon \I D\into\I
D'$, where $\I D'\in\Sigma$, and each $V\in\I D$ has weight at most that
of $\alpha(V)$. But there are only finitely many such $\I D$, as
desired. \end{proof}

\begin{dfn}
We say that $(F/Y,D)$ is \emph{$r$-generic} if
 for every minimal Enriques
diagram $\I D$ and for every $y\in Y(\I D)$, we have
\begin{equation}\label{eq43c}
	\cod_y(Y(\I D),Y) \ge \min(r+1, \cod\I D).
 \end{equation}

 We say that $(F/Y,D)$ is \emph{strongly $8$-generic} if it is
8-generic and if the analytic type of $D_{\pi(x)}$ at an ordinary
quadruple point $x\in X_4$ is not constant along any irreducible
component $Z$ of~$X_4$; that is, the cross ratio of the four tangents at
$x$ is not the same for all $x \in Z$.
 \end{dfn}

\begin{prp}\label{lm:4-3} Fix $r$. Assume that $Y$ is universally
 catenary and that $(F/Y,D)$ is $r$-generic. Then, for each $i\ge
 2$, the induced pair $(F_i/X_i, D_i)$ is $r_i$-generic.
 \end{prp}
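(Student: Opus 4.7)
The plan is to reduce $r_i$-genericity of $(F_i/X_i, D_i)$ to the $r$-genericity of $(F/Y, D)$ via a codimension comparison at a chosen geometric point. Fix $x_i \in X_i(\I D_i)$; set $x := \beta_i(x_i) \in X_i \subset F$ and $y := \pi(x) \in Y$. The goal is to show $\cod_{x_i}(X_i(\I D_i), X_i) \geq \min(r_i + 1, \cod\I D_i)$.

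First we would identify the Enriques diagram $\I D$ of the fiber $D_y$. Since $(D_i)_{x_i}$ is reduced ($\I D_i$ being a finite diagram), the multiplicity of $D_y$ at $x$ equals exactly $i$, so $\I D$ has a subtree rooted at $x$ with root-multiplicity $i$, and $\I D_i$ is obtained from $\I D$ by deleting this root. The codimension formula for Enriques diagrams used in~\cite{KP99} then yields
\begin{equation*}
\cod\I D \ \geq\ \cod\I D_i + \binom{i+1}{2} - 2,
\end{equation*}
with equality when the root has no children; each child that becomes a new root of $\I D_i$ adds $1$ to the excess.

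Next we would establish $\cod_x(X_i(\I D_i), F) = \cod_y(Y(\I D), Y) + 2$. A generization $\eta'$ of $x$ in $X_i(\I D_i)$ maps under $\pi$ to a generization $\eta = \pi(\eta')$ of $y$ in $Y(\I D)$; conversely each such $\eta$ lifts uniquely to such an $\eta'$, namely the $i$-fold point of $D_\eta$ specializing to $x$. Because $\I D$ is a finite diagram, this $i$-fold point is an isolated closed point of the smooth surface-fiber $F_\eta$, so smoothness of $F/Y$ in relative dimension $2$ gives $\dim\mc O_{F,\eta'} = \dim\mc O_{Y,\eta} + 2$; taking minima over such $\eta'$ yields the identity.

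Since $\mc I_{X_i} \subset \mc O_F$ is locally generated by $\binom{i+1}{2}$ elements by~(\ref{eq61a}), Krull's height theorem (combined with universal catenariness of $Y$, hence of $F$ and $X_i$) gives $\dim\mc O_{X_i,\eta'} \geq \dim\mc O_{F,\eta'} - \binom{i+1}{2}$ for every generization $\eta'$ of $x_i$ in $X_i(\I D_i)$. Hence $\cod_{x_i}(X_i(\I D_i), X_i) \geq \cod_x(X_i(\I D_i), F) - \binom{i+1}{2}$; combining this with the previous two bounds and the $r$-genericity hypothesis,
\begin{align*}
\cod_{x_i}(X_i(\I D_i), X_i) &\ \geq\ \cod_y(Y(\I D), Y) - \left(\binom{i+1}{2} - 2\right)\\
&\ \geq\ \min(r+1,\, \cod\I D) - \left(\binom{i+1}{2} - 2\right)\\
&\ \geq\ \min(r_i + 1,\, \cod\I D_i).
\end{align*}
The most delicate parts will be the fiber analysis of the second step---showing that $X_i(\I D_i) \to Y(\I D)$ has isolated fibers with generic points in bijection with those of $Y(\I D)$---together with the careful use of universal catenariness to pass between codimensions in $F$, $X_i$, and $Y$.
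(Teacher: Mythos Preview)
Your overall strategy matches the paper's: compare codimensions via the dimension formula for $F/Y$, the catenary identity $\cod_x(X,X_i)=\cod_x(X,F)-\cod_x(X_i,F)$, the bound $\cod_x(X_i,F)\le\binom{i+1}2$, and the Enriques-diagram inequality $\cod\I D\ge\cod\I D_i+\binom{i+1}2-2$. Two steps, however, are not correct as written.

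First, the assertion that reducedness of $(D_i)_{x_i}$ forces the multiplicity of $D_y$ at $x$ to be exactly $i$ is false: the multiplicity may equally well be $i+1$. In that case $(D_i)_K$ contains the exceptional line once (hence can still be reduced), and $\I D_i$ is \emph{not} obtained from $\I D$ by simply deleting a root of weight $i$. The paper allows both cases and cites \cite[Proposition~2.8, p.~420]{KP09} for the bound $\cod\I D\ge\cod\I D_i+\binom{i+1}2-2$; your combinatorial justification covers only the multiplicity-$i$ case.

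Second, the claimed equality $\cod_x(X_i(\I D_i),F)=\cod_y(Y(\I D),Y)+2$, argued via a bijection of generizations, fails: a generization $\eta'$ of $x$ inside $X_i(\I D_i)$ need not map into $Y(\I D)$, because $\I D$ was defined as the diagram of $D_y$, and singularities of $D$ \emph{away} from the $i$-fold point may simplify as one generizes. The proposed converse lifting is likewise unjustified. What is actually needed---and what the paper proves---is only an inequality, obtained by first passing to a \emph{generic} point $x$ of a component $X$ of the closure of $X_i(\I D_i)$. Then $y:=\pi(x)$ is the generic point of $\overline{\pi(X)}$, and since $y\in Y(\I D)$ one has $\overline{\pi(X)}\subset\overline{Y(\I D)}$, whence $\cod_y(\pi(X),Y)\ge\cod_y(Y(\I D),Y)$. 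Combined with $\cod_x(X,F)=\cod_y(\pi(X),Y)+2$ (flatness plus $x$ closed in $F_y$), this gives the required lower bound; no bijection is needed.

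With these two fixes---allowing multiplicity $i{+}1$ and replacing the equality-via-bijection by the simple inequality at a generic point---your argument becomes exactly the paper's.
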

\begin{proof}
Fix $i$. Let $\I D'$ be a minimal Enriques diagram. Let $x$ be a
generic point of the closure of $X_i(\I D')$. Then $x\in X_i(\I D')$ as
$X_i(\I D')$ is constructible by Lemma~\ref{lemCnstr} applied with
$(F_i/X_i, D_i)$ and~$\I D'$ for $(F/Y,D)$ and $\I D$. Set $y:=\pi
(x)$. Let $K$ be an algebraically closed field containing~$k(x)$; then
$K$ contains $k(y)$ too.

Consider the curves $D_K$ and $(D_i)_K$. Note $x\in X_i(\I D')$. So
the curve $(D_i)_K$ is reduced, and is obtained from $D_K$ as follows:
blow up $F_K$, at the $K$-point, $x_K$ say, defined by $x$; take the
preimage of $D_K$; and subtract $i$ times the exceptional divisor.
Hence $D_K$ is reduced and of multiplicity either $i$ or $i+1$ at $x_K$.
In the latter case, $(D_i)_K$ contains the exceptional divisor; in~the
former, it doesn't. In either case, let $\I D$ be the diagram of $D_K$.
Then \cite[Proposition~2.8, p.~420]{KP09} yields
 \begin{equation}\label{eq43b}
 \cod(\I D)\ge\cod(\I D')+\binom{i+1}2-2.
 \end{equation}

Since $F/Y$ is flat, the dimension formula yields
 \[
 \dim\mc O_{F,x} = \dim\mc O_{Y,y} + \dim\mc O_{F_y, x}.
 \]
 However, $x$ is the generic point of a component, $X$ say, of the
closure of $X_i(\I D')$; hence, $\dim\mc O_{F,x} = \cod_x(X,F)$. So $y
= \pi (x)$. So $\dim\mc O_{Y,y} = \cod_y(\pi (X),Y)$. Further, $F/Y$ is of
relative dimension 2; so $\dim\mc O_{F_y, x}=2$. Thus
 \begin{equation}\label{eq43f}
	\cod_x(X,F)-2= \cod_y(\pi (X),Y).
 \end{equation}

However, $y:=\pi (x)\in Y(\I D)$. Hence,
\[
\cod_y(\pi (X), Y)\ge \cod_y(Y(\I D),Y).
\]
Combine the last two displays; then (\ref{eq43c}) yields
 \begin{equation}\label{eqxgp}
 \cod_x(X, F) -2 \ge \min(r+1, \cod\I D).
 \end{equation}

Since $Y$ is universally catenary, $F$ is catenary; hence,
\begin{equation}\label{eqcat}
\cod_x(X, X_i) = \cod_x(X, F) - \cod_x(X_i, F).
\end{equation}
Hence (\ref{eqxgp}) and (\ref{eq61a}) yield
 \[
 \cod_x(X, X_i)\ge\min(r+1, \cod\I D)+2-\binom{i+1}2.
 \]
Therefore, (\ref{eq61b}) and (\ref{eq43b}) yield the desired lower bound:
\begin{equation*}
\cod_x(X, X_i)\ge\min(r_i+1, \cod\I D').\tag*{\qed}
\end{equation*}
\renewcommand{\qed}{}
 \end{proof}

\begin{cor}\label{lm66}
Fix $r$. Assume $(F/Y,D)$ is $r$-generic.
Fix $i\ge2$, let $X$ be a component of $X_i$, take $x\in X\smallsetminus Y(\infty)$,
and set $y:=\pi (x)$. Then
\begin{gather}\label{eq43d}
\cod_x(X,F) =\binom{i+1}2, \qquad
 \cod_y(\pi (X),Y) =\binom{i+1}2 -2\qquad \text{if} \quad r_i \ge-1,
 \\
 \label{eq43d'}
 \cod_x(X,F) \ge r+3,\qquad
 \cod_y(\pi (X),Y) \ge r+1 \qquad \text{if} \quad r_i\le-1.
\end{gather}
 \end{cor}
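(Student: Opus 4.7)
My plan is to pass to the generic point of $X$, apply the flat dimension formula~(\ref{eq43f}) from the proof of Proposition~\ref{lm:4-3}, and combine the universal upper bound~(\ref{eq61a}) with a lower bound extracted from the $r$-genericity of $(F/Y,D)$ applied at the Enriques diagram of the geometric fiber. First I would reduce to $x=\eta$, the generic point of $X$: because $X$ is irreducible, $\cod_x(X,F)=\dim\mc O_{F,\eta}$; and since $\pi$ is proper, $\pi(X)$ is closed and irreducible with generic point $\pi(\eta)$, so $\cod_y(\pi(X),Y)=\dim\mc O_{Y,\pi(\eta)}$. The hypothesis gives $y\notin Y(\infty)$; by Lemma~\ref{lemCnstr}, $Y(\infty)$ is closed (since $F/Y$ is projective), so the generization $\pi(\eta)$ of $y$ also avoids $Y(\infty)$ and $D_{\pi(\eta)}$ is geometrically reduced. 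Reducedness forces the multiplicity-$\ge i$ locus in the fiber $F_{\pi(\eta)}$ to be zero-dimensional, whence $\dim\mc O_{F_{\pi(\eta)},\eta}=2$; flatness of $\pi$ then reproduces $\cod_x(X,F)=\cod_y(\pi(X),Y)+2$ exactly as at~(\ref{eq43f}).

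Next, let $\I D$ denote the Enriques diagram of $D_K$, where $K$ is an algebraic closure of $k(\pi(\eta))$. Since $\eta$ is a closed point of $F_{\pi(\eta)}$ lying in $X_i$, base change to $K$ yields $K$-points of $F_K$ at which $D_K$ has multiplicity $\ge i$, so $\I D$ contains a root vertex of weight $\ge i$; the subdiagram at that vertex alone contributes at least $\binom{i+1}{2}-2$ to the codimension, giving $\cod\I D\ge\binom{i+1}{2}-2$. Since $\pi(\eta)\in Y(\I D)$ is a generization of itself, the definition of codimension gives $\cod_{\pi(\eta)}(Y(\I D),Y)\le\dim\mc O_{Y,\pi(\eta)}=\cod_y(\pi(X),Y)$, so applying $r$-genericity at $\pi(\eta)$ yields
\[
\cod_y(\pi(X),Y)\ \ge\ \min(r+1,\cod\I D)\ \ge\ \min\!\left(r+1,\tbinom{i+1}{2}-2\right).
\]

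Finally, the dimension formula upgrades this to $\cod_x(X,F)\ge\min\!\left(r+3,\binom{i+1}{2}\right)$, while (\ref{eq61a}) gives the reverse bound $\cod_x(X,F)\le\binom{i+1}{2}$. If $r_i\ge -1$, i.e.\ $\binom{i+1}{2}\le r+3$, the two bounds meet at $\binom{i+1}{2}$, giving (\ref{eq43d}); if $r_i\le -1$, the lower bound is $r+3$, giving (\ref{eq43d'}). The statements on $\cod_y(\pi(X),Y)$ then follow by subtracting $2$. The one genuinely delicate step is the opening reduction to the generic point: it relies on $Y(\infty)$ being closed, and hence on projectivity of $F/Y$; without it, $\pi(\eta)$ could drift into $Y(\infty)$ even when $y$ does not. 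The rest is direct bookkeeping with the dimension formula and $r$-genericity.
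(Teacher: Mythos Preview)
Your proof is correct and follows essentially the same route as the paper's: reduce to the generic point of $X$, use the flat dimension formula to relate $\cod_x(X,F)$ and $\cod_y(\pi(X),Y)$, bound $\cod\I D\ge\binom{i+1}{2}-2$ from the root of weight $\ge i$, apply $r$-genericity, and match against the upper bound~(\ref{eq61a}). The paper phrases this as an appeal back to the proof of Proposition~\ref{lm:4-3} (specifically to~(\ref{eq43f}) and~(\ref{eqxgp})), whereas you re-derive those ingredients directly; you are also more explicit than the paper about why the passage to the generic point is legitimate (closedness of $Y(\infty)$ under projectivity), which the paper dismisses as ``plain.''
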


\begin{proof}
 Plainly we may assume $x$ is the generic point of $X$. Let $K$ be an
algebraically closed field containing $k(x)$, so $k(y)$. Then $D_K$ is
reduced as $x\notin Y(\infty)$. Let $\I D$ be the diagram of $D_K$, and
$\I D'$ that of $D_i$. Then $X$ is a component of the closure of
$X_i(\I D')$. So we may appeal to the proof of Proposition~\ref{lm:4-3}.
Note that equation~(\ref{eqcat}) is trivial here, and we do not need $Y$
to be universally catenary.

Since $x\in X_i$, at the corresponding $K$-point, $D_K$ is of multiplicity
at least $i$. Hence $\I D$ has a~root of weight at least $i$. So
$\cod\I D\ge\binom{i+1}2-2$.

Suppose $r_i\ge-1$. Then (\ref{eq61b}) yields $r+1\ge\binom{i+1}2-2$. So
(\ref{eqxgp}) yields
\[
\cod_x(X,F) \ge\binom{i+1}2.
\]
 But the opposite inequality is (\ref{eq61a}), which always holds. So
equality holds. Thus, in (\ref{eq43d}), the first equation holds. The
second follows from it and (\ref{eq43f}).

Suppose $r_i\le-1$ instead. Then $\binom{i+1}2-2\ge r+1$. So $\cod\I
D\ge r+1$. Hence (\ref{eqxgp}) and (\ref{eq43f}) yield (\ref{eq43d'}).
Thus the corollary is proved.
 \end{proof}

\section{Virtual double points}\label{sc:virtual}
 The minimal Enriques diagram $r\I A_1$ consists of $r$ roots of weight
2 and no other vertices. The corresponding scheme $G(r\I A_1)$ is
particularly important, as it is equal to the subscheme of the Hilbert
scheme $\Hilb^{3r}_{D/Y}$ associated to the geometric fibers of $D/Y$
with at least $r$ distinct singular points. Moreover, we need to
consider it for various $(F/Y,D)$ and $r$. So, for clarity, we set
$G(F/Y,D; r):=G(r\I A_1)$.

In this section, we first recall the basic properties of
$G(F/Y,D; r)$, which were treated in \cite[Proposition~5.9,
p.~439]{KP09}. Then we fix $r\ge1$, and assume $(F/Y,D)$ is $r$-generic.
For each $i\ge1$, we find a natural large open subscheme of
\[ H_i:=G(F_i/X_i, D_i; r_i)\]
 such that the associated geometric fibers of $D_i/X_i$ have exactly
$r_i$ nodes. None lies on the exceptional divisor of a fiber of~$F_i$.
Further, adding the exceptional divisor to the fiber of~$D_i/X_i$ yields
a fiber of~$D_{i-1}/X_{i-1}$, and thus establishes an isomorphism from
the preceding open subscheme to a natural open subscheme of~$H_{i-1}$,
which is dense in the preimage of~$X_i$. These results are treated in
Lemmas~\ref{lemZiso1} and~\ref{lemZiso2} below for later use.

\begin{sbs}{}{\bf Subschemes of the Hilbert scheme.} \label{sbsHZr}
Fix $r$. If
$r\ge1$, let $H(r)$ denote the open subscheme of $\Hilb^r_{F/Y}$ over
which the universal family is smooth; in other words, $H(r)$
parameterizes the unions of $r$ distinct reduced points in the geometric
fibers of $F/Y$. By~convention, if $r=0$, then~$H(r)$ and
$\Hilb^r_{F/Y}$ are both equal to $Y$; if $r\le-1$, then both are empty.

If $r\ge 1$, then Proposition~5.9 on p.~439 in \cite{KP09} asserts that
$H(r)=Q(r\I A_1)$ and that the map $\Psi\colon H(r)\to\Hilb^{3r}_{F/Y}$ is
given on $T$-points, where $T$ is a $Y$-scheme, by sending a subscheme
$W$ of $F_T$, say with ideal $\mc I$, to the subscheme $W'$ with ideal
$\mc I^2$ (note that $W'$ is flat, because the standard sequence
{\samepage \[
0\to\mc I/\mc I^2\to\mc O_{W'}\to\mc O_W\to0
\]
is exact and because $\mc I/\mc I^2$ and $\mc O_W$ are flat);
furthermore, $\Psi$ is always an embedding.

}

Consequently, we may view $G(r\I A_1)$ as a subscheme of
$\Hilb^{3r}_{D/Y}$. Set\vspace{-.5ex}
\[
G(F/Y,D; r):=G(r\I A_1)\subset\Hilb^{3r}_{D/Y}
\]
to avoid confusion. Furthermore, set $G(F/Y,D; 0):=Y$, and for
$r\le-1$, set $G(F/Y,D; r)\allowbreak:=\varnothing$. Finally, for an arbitrary
fixed $r$ and for $i\ge0$, set\vspace{-.5ex}
\[
H_i:=G(F_i/X_i, D_i; r_i).
\]
\end{sbs}

\begin{lem}\label{lemZiso1}
 Fix $r\ge 1$. Assume that $(F/Y,D)$ is $r$-generic and that $Y(\infty)$
is empty. Then there is an open subscheme $U\subset Y$ such that $(1)$
for every $y\in Y\smallsetminus U$,
 \begin{equation}\label{eqZisob}\vspace{-.5ex}
 \cod_y(Y\smallsetminus U, Y)\ge r+1
 \end{equation}
 and $(2)$ for every $i\ge1$ with $r_i\ge0$, if we set\vspace{-.5ex}
 \[
 U_i:=\big(\pi^{-1}U\big)\cap (X_i\smallsetminus X_{i+1})\qquad \text{and} \qquad
 V_i:=\big(\pi^{-1}U\big)\cap (X_{i-1}\smallsetminus X_{i+1}),
 \]
 then $U_i$ is dense in $X_i$, and there is a natural isomorphism of
 $F$-schemes
 \begin{equation*}
 \gamma_i\colon \ H_i\x_FU_i\risom H_{i-1}\x_FU_i.
 \end{equation*}
 \end{lem}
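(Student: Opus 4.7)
My plan breaks the proof into three stages: construct the open set $U$ via finitely many Enriques-stratum closures, verify density of each $U_i$ in $X_i$ using Corollary~\ref{lm66}, and build the isomorphism $\gamma_i$ on the common ambient surface ${\pi'}^{-1}(U_i)\subset F'$.

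For the construction of $U$, I would exploit the finiteness statement in Lemma~\ref{lemCnstr}: since $Y(\infty)=\varnothing$, only finitely many minimal Enriques diagrams $\I D$ satisfy $Y(\I D)\ne\varnothing$; writing $\Sigma_{\tu{bad}}$ for those with $\cod\I D\ge r+1$, I would set
\[
U := Y\smallsetminus\bigcup_{\I D\in\Sigma_{\tu{bad}}}\overline{Y(\I D)},
\]
an open subscheme. Claim~(1) then follows because every irreducible component of $Y\smallsetminus U$ is a component of some $\overline{Y(\I D)}$ with $\I D\in\Sigma_{\tu{bad}}$; its generic point lies in $Y(\I D)$, and there $r$-genericity gives codimension $\ge\min(r+1,\cod\I D)=r+1$.

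For the density statement, fix $i\ge 1$ with $r_i\ge 0$, and let $x$ be the generic point of an irreducible component $X$ of $X_i$. Corollary~\ref{lm66} gives $\cod_x(X,F)=\binom{i+1}2$. If $X\subset X_{i+1}$, then $X$ is also a component of $X_{i+1}$; applying the same corollary to $X_{i+1}$ would force $\cod_x(X,F)=\binom{i+2}2$ or $\cod_x(X,F)\ge r+3$, both strictly exceeding $\binom{i+1}2$ (using $r_i\ge 0$ in the second case), so $x\notin X_{i+1}$. To see $y:=\pi(x)\in U$, note that $\overline{Y(\I D)}\supset\overline{\pi(X)}$ when $\I D$ is the Enriques diagram of $D_y$, whence $\cod_y(Y(\I D),Y)\le\binom{i+1}2-2\le r$ by~(\ref{eq43d}); $r$-genericity then forces $\cod\I D\le r$, and the same codimension comparison rules out $y\in\overline{Y(\I D')}$ for any $\I D'\in\Sigma_{\tu{bad}}$.

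For $\gamma_i$, the key observation is that $U_i\subset X_i\subset X_{i-1}$, so $F_i\x_F U_i=F_{i-1}\x_F U_i={\pi'}^{-1}(U_i)$, and on this common surface $D_{i-1}|_{U_i}=D_i|_{U_i}+E|_{U_i}$ by~(\ref{eq61b}). The main substep is that $Z:=D_i\cap E|_{U_i}$ is $U_i$-flat of fiber degree $i$: in a blowup chart at $x\in U_i$, where $D$ has exact multiplicity $i$ with leading form $f_i$, the divisor $D_i$ is locally cut out by $f_i(1,t)+u\cdot(\text{higher})$, so $Z$ is cut out by $(u,f_i(1,t))$, visibly of length $i$ in $E_x\cong\bb P^1$. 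Granting this, I would define $\gamma_i$ on $T$-valued points by sending an $r_i$-point family $W\subset(D_i)_T$ to $W\sqcup Z_T\subset(D_{i-1})_T$, using the ideal-squaring description of $\Psi$ from~\ref{sbsHZr}; the inverse strips off $Z_T$, intrinsically identified as the intersection of the reduced $r_{i-1}$-point support with $E_T$. I expect the main obstacle to be this flatness and intrinsic identification of $Z$, since intersections of divisors can a priori jump in length; the hypotheses $Y(\infty)=\varnothing$ and $r$-genericity force exact multiplicity $i$ along $U_i$, yielding constant fiber length $i$.
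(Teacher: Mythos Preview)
Your construction of $U$ and your density argument are essentially the paper's own: define $U$ as the complement of the closures of $Y(\I D)$ with $\cod\I D\ge r+1$, invoke finiteness from Lemma~\ref{lemCnstr}, and use the codimension formulas of Corollary~\ref{lm66} to place the generic point of each component of $X_i$ inside $U_i$. This part is fine.

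The gap is in your construction of $\gamma_i$. You claim the key substep is that $Z:=D_i\cap E|_{U_i}$ is $U_i$-flat of fibre degree~$i$, and you justify this via exact multiplicity~$i$. But flatness is not what you need: for $W\sqcup Z_T$ to define a $T$-point of $H_{i-1}$ you need $Z_T$ to be $T$-\emph{smooth} (\'etale over~$T$), i.e.\ the fibre $f_i(1,t)$ must be separable. This fails precisely when the $i$-fold point at $x_K$ is not ordinary, and exact multiplicity~$i$ along $U_i$ does \emph{not} force ordinariness (for instance a cusp lies in $U_2$ for $r\ge 2$). The paper resolves this by an argument you have omitted: for a $T$-point of $H_i\x_FU_i$, the datum $z\in H_i$ contributes $r_i$ further singular points to $(D_i)_K$, so $\cod\I D'\ge r_i$; combined with the inequality $\cod\I D\ge\cod\I D'+\binom{i+1}2-2$ of \cite[Proposition~2.8]{KP09} this forces $\cod\I D\ge r$, while $y\in U$ forces $\cod\I D\le r$; equality in the inequality then characterizes ordinariness. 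This same argument is what guarantees that the $r_i$ nodes of $W$ lie off $E_K$, so that $W$ and $Z_T$ are genuinely disjoint.

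Your proposed inverse (``strip off $Z_T$, intrinsically identified as the intersection with $E_T$'') also needs work beyond what you indicate. Given a $T$-point $S$ of $H_{i-1}\x_FU_i$, you must show \emph{scheme-theoretically} that $S$ agrees with $Z$ in a neighbourhood of $E_T$, not merely that $S$ has $i$ fibrewise points on $E_K$. The paper does this by observing that if $f$ cuts out $(D_{i-1})_T$ and $I$ is the ideal of $S$, then $f\in I^2$ forces $f,\partial f/\partial x,\partial f/\partial y\in I$; but near $E_T$ these three functions generate the ideal of $Z$, so $Z\supset S$ there, and equality follows by comparing flat lengths. Your sketch acknowledges the ``intrinsic identification'' as an obstacle but supplies no mechanism for it.
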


\begin{proof}
 Let $U$ be the complement in $Y$ of the union of the closures of those
$Y(\I D)$ with $\cod\I D\ge r+1$. Then $U$ is open since there are only
finitely many nonempty $Y(\I D)$ by Lemma~\ref{lemCnstr}. By~the same
token, (\ref{eq43c}) implies (\ref{eqZisob}). Thus (1) holds.

Fix $i\ge1$ such that $r_i\ge0$. Then $r+1>\binom{i+1}2-2$. Let $X$ be
a component of $X_i$; let $x\in X$, set $y:=\pi (x)$. Then (\ref{eq43d})
yields that $r+1> \cod_y(\pi (X),Y)$; moreover, if $x\in X_{i+1}$, then $
\cod_y(\pi (X_{i+1}),Y) > \cod_y(\pi (X),Y)$. So (\ref{eqZisob}) implies
$\pi (X)\smallsetminus \pi (X_{i+1})\not\subset Y\smallsetminus U$. Hence
$\pi (X_i\smallsetminus X_{i+1})$ meets $U$. Thus $U_i$ is dense in
$X_i\smallsetminus X_{i+1}$.

Let $z\in H_i\x_FU_i$; let $x$ be its image in $U_i$, and
set $y:=\pi (x)$. Let $K$ be an algebraically closed field containing
$k(z)$, so $k(x)$ and $k(y)$ too. Then $D_K$ is reduced since
$Y(\infty)$ is empty, and $D_K$ has multiplicity exactly $i$ at the
$K$-point $x_K$ defined by $x$ since $x\in U_i$, so $x\notin X_{i+1}$.
Hence $(D_i)_K$ is reduced, and does not contain the exceptional divisor
$E_K$.

Let $\I D$ be the diagram of $D_K$, and $\I D'$ that of $(D_i)_K$. By~\cite[Proposition~2.8, p.~420]{KP09}, we have
 \begin{equation}\label{eq67c}
 \cod(\I D)\ge\cod(\I D')+\binom{i+1}2-2,
 \end{equation}
 with equality if and only if $D_K$ has an ordinary $i$-fold point at
$x_K$. Now, $(D_i)_K$ has at least $r_i$ singular points since $z\in
H_i$; hence, formula (2.6.2) in \cite[p.~419]{KP09} yields $\cod(\I
D')\ge r_i$ since $\I D'$ has at least $r_i$ roots, each root has
multiplicity at least $2$, and the summands in that formula
corresponding to the other vertices of $\I D'$ are nonnegative. So the
right-hand side of~(\ref{eq67c}) is at least $r$. However, $r\ge
\cod(\I D)$ since $y\in U$. So equality obtains everywhere. Hence
$D_K$ has an ordinary $i$-fold point at $x_K$. Furthermore, $(D_i)_K$
has exactly $r_i$ singular points, each is an ordinary double point, and
none lies on~$E_K$; also, $(D_i)_K$ and $E_K$ meet transversally in $i$
points.

We define $\gamma_i$ as follows. A $T$-point of its source $H_i\x_FU_i$
is given by a map $T\to U_i$ and a $T$-smooth subscheme $W\subset F'_T$
of relative length $r_i$ whose squared ideal defines a subscheme
$W'\subset F'_T$ contained in $(D_i)_T$. Owing to the discussion above,
in every geometric fiber of $F'_T/T$, the fibers of $(D_i)_T$ and $E_T$
meet transversally in $i$ points. Hence, since $(D_i)_T$ and $E_T$ are
relative effective divisors, their intersection is a $T$-smooth
subscheme $Z\subset F'_T$ of relative length $i$.

Let $Z'$ be the subscheme of $F'_T$ defined by the squared ideal of $Z$.
Then $Z'$ is contained in the sum $(D_i)_T+E_T$, which is equal to
$(D_{i-1})_T$. So $W\cup Z$ is a $T$-smooth subscheme of $F'_T$ of
relative length $r_i+i$, or $r_{i-1}$. And its squared ideal defines a
subscheme of $F'_T$, namely $W'\cup Z'$, which is contained in
$(D_{i-1})_T$. So $W\cup Z$ determines a $T$-point of $H_{i-1}\x_FU_i$,
and the latter scheme is to be the target of $\gamma_i$. We define
$\gamma_i$ by sending $W$ to $W\cup Z$. Plainly, $\gamma_i$ is
injective on $T$-points since $W$ is determined by $W\cup Z$ as the part
off $E_T$.

To prove $\gamma_i$ is surjective on $T$-points, fix a $T$-point of
$H_{i-1}\x_FU_i$. It is given by a map $T\to U_i$ and a $T$-smooth
subscheme $S\subset F'_T$ of relative length $r_{i-1}$ such that its
squared ideal defines a~subscheme $S'\subset F'_T$ contained in
$(D_{i-1})_T$. Then $(D_{i-1})_T=(D_i)_T+E_T$ by (\ref{eq61b}), and
$(D_i)_T$ is relative effective by Lemma~\ref{lm:4-2} since $U_i\subset
X_i$. Let $W$ be the part of $S$ off $E_T$. Plainly, $W$ is a~$T$-smooth subscheme of $F'_T$, and its squared ideal defines a
subscheme contained in $(D_i)_T$.

Consider a geometric point of $T$, say with (algebraically closed) field
$K$. Then $D_K$ is reduced since $Y(\infty)$ is empty, and $D_K$ has
multiplicity exactly $i$ at the center of $K$ since $T$ maps into~$U_i$.
Hence $(D_i)_K$ is reduced, and $(D_i)_K\cap E_K$ is a scheme of length~$i$. Now, $S_K$ is $K$-smooth of length~$r_{i-1}$. Hence $S_K$
consists of $r_{i-1}$ distinct reduced points, of which at most $i$ lie
on $E_K$. So~$W_K$ consists of at least $r_{i-1}-i$, or $r_i$, distinct
reduced points. By~choosing any $r_i$ of them, we obtain a $K$-point of
$H_i\x_FU_i$. But then, by the discussion of such points right after
\eqref{eq67c}, there was no choice: $(D_i)_K$ has exactly $r_i$ singular
points, and all are ordinary nodes. Hence~$W_K$ consists exactly of
$r_i$ distinct reduced points. Thus $W$ is of relative length $r_i$.

Therefore, $W$ defines a $T$-point of $H_i\x_FU_i$.
According to the discussion above, this $T$-point is carried by
$\gamma_i$ to the $T$-point of $H_{i-1}\x_FU_i$ that is given by
$R$, where $R:=W\cup Z$ and $Z:=(D_i)_T\cap E_T$. To prove that $\gamma_i$
is surjective on $T$-points, so bijective on $T$-points, so an
isomorphism, it remains to prove that $R=S$.

The equation $R=S$ may be checked locally over $T$ and locally on $F$.
So we may replace $T$ and $F$ by affine open subsets $\Spec(A)$ and
$\Spec(B)$. Then $B$ is \'etale over a polynomial subring $A[x,y]$.
Let $I\subset B$ denote the ideal of $S$. Shrinking $F$ further if
necessary, we can find an $f\in B$ that generates the ideal of
$(D_{i-1})_T$. Then $f\in I^2$ as $R$ determines a $T$-point of
$H_{i-1}$. Hence~$f$, $\partial f/\partial x$, $\partial f/\partial y\in
I$. But those three elements generate the ideal of $Z:=(D_i)_T\cap E_T$
on a~neighborhood $N$ of $E_T$. Hence $Z\supset S\cap N$. But both $Z$
and $S\cap N$ are $T$-flat of relative length~$i$. Hence $Z = S\cap N$.
But $R$ and $S$ are equal off $E_T$. Thus $R=S$, as desired.
 \end{proof}

\begin{lem}\label{lemZiso2}
 Under the conditions of Lemma~{\rm \ref{lemZiso1}}, the closed
subscheme $H_{i-1}\x_FU_i$ of $H_{i-1}\x_FV_i$ is also open.
 \end{lem}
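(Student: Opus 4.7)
The plan is to show that $H_{i-1}\x_F(V_i\smallsetminus U_i)$ is closed in $H_{i-1}\x_F V_i$; together with the fact that $H_{i-1}\x_F U_i$ is already closed (since $U_i=V_i\cap X_i$ with $X_i$ closed in $F$), this gives the openness of $H_{i-1}\x_F U_i$. The main idea is to analyze the Enriques diagram $\I D(y)$ of $D_y$, where $y:=\pi(x)$ for a point $z\in H_{i-1}\x_F V_i$ lying over $x\in V_i$, and use that the two cases $x\in U_i$ versus $x\in V_i\smallsetminus U_i$ yield distinct diagrams of the same codimension $r$, precluding any Zariski specialization from the second preimage to the first.

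First I would repeat the Enriques analysis from the proof of Lemma~\ref{lemZiso1}, but with $D_K$ of multiplicity $m:=i$ (if $x\in U_i$) or $m:=i-1$ (if $x\in V_i\smallsetminus U_i$) at $x_K$; thus $(D_{i-1})_K$ equals the strict transform of $D_K$ plus $(m-i+1)E_K$, that is, plus either $E_K$ or zero copies. Combining \cite[Proposition~2.8]{KP09} with the existence of $r_{i-1}$ distinct double points in $(D_{i-1})_K$ and with the bound $\cod\I D(y)\le r$ coming from $y\in U$, one obtains: if $x\in U_i$, then $\I D(y)$ is an ordinary $i$-fold at $x$ together with $r_i$ further ordinary nodes, of codimension $r$; if $x\in V_i\smallsetminus U_i$ and $i\le 2$, then the required codimension would exceed $r$, so the preimage is empty; and if $x\in V_i\smallsetminus U_i$ and $i\ge 3$, then $\I D(y)$ is an ordinary $(i-1)$-fold at $x$ together with $r_{i-1}$ further ordinary nodes, also of codimension $r$.

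For $i\le 2$ the lemma is immediate, so assume $i\ge 3$. Suppose for contradiction that $z_0\in H_{i-1}\x_F U_i$ is a specialization in $H_{i-1}$ of some $z_1\in H_{i-1}\x_F(V_i\smallsetminus U_i)$. Setting $y_k:=\pi(x(z_k))$ and $\I D_k:=\I D(y_k)$, the diagrams $\I D_1$ and $\I D_2$ are distinct (different weight of singular root at $x$) but both have codimension $r$. From the specialization of $z_1$ to $z_0$ we get that $y_1$ specializes to $y_0$ in $Y$, exhibiting $\I D_1=\I D(y_0)$ as a limit of curves of type $\I D_2$; by the standard fact that any proper specialization of Enriques diagrams strictly increases the codimension, we obtain $\cod\I D_1>\cod\I D_2=r$, contradicting $\cod\I D_1=r$. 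Hence no such specialization exists, so $H_{i-1}\x_F(V_i\smallsetminus U_i)$ is closed.

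The main obstacle is justifying the strict codimension jump for proper specializations of Enriques diagrams at the threshold $\cod=r$; this is standard in the equisingularity literature and is implicit in \cite{KP09}, but it is not explicit in the present paper and would need a careful reference.
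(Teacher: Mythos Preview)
Your approach is genuinely different from the paper's. The paper gives a direct scheme-theoretic argument: it localizes to a complete local ring $A$ with algebraically closed residue field, writes the defining equation of $D$ as a power series $f=\sum f_j$, and shows by an inductive Vandermonde-determinant computation that the degree-$(i-1)$ part $f_{i-1}$ vanishes identically, so that the map $T\to V_i$ factors through $U_i$ as schemes. Your argument is instead topological, via the Enriques-diagram stratification.

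There are two gaps.

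First, the ``standard fact'' you invoke---that a proper specialization of equisingularity types strictly increases the Enriques-diagram codimension---is not available from the hypotheses. The $r$-genericity of $(F/Y,D)$ forces $\cod_Y Y(\I D)=\cod\I D$ when $\cod\I D\le r$, but says nothing about adjacencies between two strata both of codimension $r$. In your specific situation this gap can in fact be closed by a different invariant: compute $\delta(\I D_0)=\binom{i}{2}+r_i$ and $\delta(\I D_1)=\binom{i-1}{2}+r_{i-1}$, so $\delta(\I D_1)-\delta(\I D_0)=1$; upper semicontinuity of the total $\delta$-invariant in a flat family of reduced curves then rules out the specialization $y_1\rightsquigarrow y_0$. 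But you would have to justify that semicontinuity carefully for the given family (the ambient surfaces vary), and you do not do so.

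Second, and more seriously, even if your specialization argument is completed, it only proves that the underlying \emph{set} $|H_{i-1}\times_FU_i|$ is open in $|H_{i-1}\times_FV_i|$. The lemma asserts that the closed \emph{subscheme} is open, i.e., that the ideal sheaf cutting it out vanishes on its support. A surjective closed immersion into an open subscheme need not be an isomorphism (think of $\Spec k\hookrightarrow\Spec k[\epsilon]/(\epsilon^2)$). The paper's computation is precisely what rules out such nilpotents: showing $f_{i-1}=0$ in $A[[u,v]]$ is the statement that the ideal of $X_i$ in $X_{i-1}$ pulls back to zero, not merely to something nilpotent. This scheme-theoretic content matters for the application in Proposition~\ref{prp611}, where one needs equality of \emph{cycles}, not just of supports. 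Your purely topological argument does not supply it.
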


\begin{proof}
 Consider any $T$-point of $H_{i-1}\x_FV_i$. Let $T'$ be the preimage
of $H_{i-1}\x_FU_i$. It suffices to prove $T'$ is an open subscheme, as
we may take $T=H_{i-1}\x_FV_i$.

Let $\mc I\subset \mc O_T$ denote the ideal of $T'$. Then it suffices
to prove that the stalk $\mc I_t$ vanishes for all $t\in T'$ for the
following reason. Since $\mc I$ is coherent, the $t\in T$, where $I_t$
vanishes form an open subset $T''$. By~hypothesis, $T'\subset T''$.
But, if $t\notin T'$, then $I_t=\mc O_{T,t}$; whence, $T'\supset T''$.
So $T'= T''$. Give $T''$ the induced structure as an open subscheme of
$T$. Then $T'$ is the closed subscheme of $T''$ with ideal $\mc
I\mid_{T''}$. But $\mc I\mid_{T''}=0$. Thus $T'$ is equal to the open
subscheme $T''$.

Given $t\in T'$, to check if $\mc I_t$ vanishes, we may replace $T$ by
$\Spec(\mc O_{T,t})$. Thus we may assume that $T$ is of the form
$\Spec(A)$, where $A$ is local and that $T'$ is nonempty. Then it
suffices to prove the ideal $I\subset A$ of $T'$ vanishes, or
equivalently, $T=T'$.

There exists a flat local homomorphism $A\to B$ such that $B$ is
complete and its residue class field is algebraically closed. Then
$A\to B$ is faithfully flat. So $I$ vanishes if $I\ox_AB$ does. Thus
we may replace $A$ by $B$, and so assume that $A$ is complete and its
residue class field is algebraically closed.

Consider the composition $T\to H_{i-1}\x_FV_i\to V_i$. Via it, $T'$ is
the preimage of $U_i$. Hence $T=T'$ if and only if $(D_i')_T$ is
effective, owing to Lemma~\ref{lm:4-2}. By~the same token,
$(D_{i-1}')_T$ is effective.

Consider the local ring $C$ of $F_T$ at the closed point of the center
of the blowing up $F'_T\to F_T$. Let $\wh C$ be its completion. Since
$C\to\wh C$ is faithfully flat, $(D_i')_T$ is effective if and only if
$(D_i')_T\ox_C\wh C$ is effective.

As $F/Y$ is a smooth family of surfaces and as $\wh C$ is complete with
algebraically closed residue class field, $\wh C$ is a power series
ring; say $\wh C=A[[u,v]]$. Say that the section $T\to F_T$ is defined
by mapping $u$,~$v$ to $a,b\in A$. Replacing $u$,~$v$ by $u-a$, $v-b$, we
may assume that $T\to F$ is defined by mapping $u$,~$v$ to $0$,~$0$.

Let $f$ in $A[[u,v]]$ define the pullback of $D$. Write
$f=f_1+f_2+\dotsb$, where $f_j$ is homogeneous of degree $j$ in $u$ and
$v$. Then $f_j=0$ for $1\le j\le i-2$ since $(D_{i-1}')_T\ox_C\wh C$
is effective. It remains to prove that $f_{i-1}=0$.

To prove that $f_{i-1}=0$, denote the maximal ideal of $A$ by $\I m$,
and write
\begin{equation}\label{equ69h}
f_{i-1}(u,v) = a_1u^{i-1}+a_2u^{i-2}v+\dotsb+a_iv^{i-1}
\qquad \text{with} \quad a_j\in A.
\end{equation}
Then it suffices to prove that
$a_j\in\I m^n$ for all $j$ and $n\ge0$.

Since $T$ maps into $H_{i-1}$, there is a $T$-smooth subscheme $S\subset
F'_T$ of relative length $r_{i-1}$ whose squared ideal defines a
subscheme $S'\subset F'_T$ contained in $(D_{i-1})_T$. Since $A$ has an
algebraically closed residue class field $K$, the fiber $S_K$ consists
of $r_{i-1}$ distinct points. Of them, exactly $i$ lie on $E_K$ according
to our discussion above. Further, the fiber $S_K$ is the singular locus
of $(D_{i-1})_K$, which consists of $r_{i-1}$ ordinary double points,
and $i$ of them constitute $(D_i)_K\cap E_K$, which is a~transverse
intersection.

By replacing $u$ and $v$ with suitable linear combinations of
themselves, we may assume that the $u$-axis is not tangent to $D_K$ at
the center of the blowing up. Now, $A$ is complete; so by Hensel's
lemma, $S$ decomposes into the disjoint sum of $r_{i-1}$ sections. Of
them, $i$ sections meet~$E_T$. Hence, they correspond to $A$-algebra
maps
 \[
 s_j\colon\ A[[u,v]][w]\big/(u-vw)\to A\qquad \text{for} \quad 1\le j\le i.
 \]

Set $b_j:=s_j(v)$ and $c_j:=s_j(w)$. Then, for all $j$, let's check that
\[
b_j\in\I m \qquad \text{and} \qquad f_i(c_j,1)\in \I m.
\]
The first relation holds as the closed points of the sections lie on
$E_K$. The second holds because these same points lie on $(D_i)_K\cap
E_K$. Further, the points are distinct; so mod $\I m$, the $c_j$ are
distinct elements of $K$.

Proceeding by induction on $n\ge1$, suppose that $b_j\in\I m^n$ for all $j$. Set
\[
\bar f(v, w):= f(vw, v)/v^{i-1}.
\]
Then $\bar f$ defines the pullback of $(D_{i-1})_T$. Now, $S\subset
(D_{i-1})_T$. Hence, for each $j$,
\begin{equation*}\label{equ69i}
0 = \bar f(b_j, c_j) = f_{i-1}(c_j, 1) + b_j f_i(c_j, 1) + b_j^2 d_j
\end{equation*}
for some $d_j\in A$. But $f_i(c_j, 1)\in \I m$. Thus $f_{i-1}(c_j, 1)
\in \I m^{n+1}$.

From (\ref{equ69h}), we obtain the following linear system of equations
for the $a_j$:
\[
f_{i-1}(c_j, 1)=a_1c_j^{i-1}+a_2c_j^{i-2}+\dotsb+a_i
	\qquad \text{for} \quad 1\le j\le i.
\]
The coefficient matrix is Vandermonde. Its determinant is invertible
in $A$, as the $c_j$ are distinct mod $\I m$. As $f_{i-1}(c_j, 1) \in
\I m^{n+1}$ for all $j$, solving yields $a_j\in\I m^{n+1}$.

To complete the proof, we must show $b_j \in\I m^{n+1}$ for each
$j$. Set $I_j:=\Ker(s_j)$. Then $\bar f\in I_j^2$ as
$S'\subset(D_{i-1})_T$. Hence $\partial\bar f/\partial w\in
I_j$. Therefore,
\begin{equation}\label{equ69j}
0 = (\partial\bar f/\partial w)(b_j, c_j)
 = (\partial f_{i-1}/\partial u)(c_j, 1)
 + b_j (\partial f_i/\partial u)(c_j, 1) + b_j^2 d_j'
\end{equation}
for some $d_j'\in A$. Now, $a_j\in\I m^{n+1}$; so (\ref{equ69h}) yields
 \[(\partial f_{i-1}/\partial u)(c_j, 1)
 = (i-1)a_1c_j^{i-2}+(i-2)a_2c_j^{i-3}+\dotsb+a_{i-1}\in\I m^{n+1}.\]
But $b_j\in\I m^n$. So (\ref{equ69j}) yields
	$b_j (\partial f_i/\partial u)(c_j, 1) \in\I m^{n+1}$. But
$(c_j, 1)$ is, mod $\I m$, a simple root of~$f_i$; so $(\partial
f_i/\partial u)(c_j, 1) \notin\I m$. Thus $b_j \in\I m^{n+1}$, as
desired.
\end{proof}

\section{Intersection theory}\label{sec:inters}
 For use in the remaining sections, we extend the intersection theory
of bivariant classes developed in \cite[Chapter~17]{Fu} and generalized
over any universally catenary base in \cite[Sections~2 and~3]{Kl85}, in
\cite{Th87}, and in \cite[Chapter~42]{stacks-project}. However, only~\cite{Fu} is cited below.

\begin{sbs}{}{\bf Push down.} 
 Assume that $f\colon X\to Y$ is a map of schemes such that its
orientation class~$[f]$ is defined \cite[Section~17.4, p.~326]{Fu}. If $f$ is
also proper, define an additive map
\begin{equation*}\label{eqlp}
 f_\#\colon\ A^*(X) \to A^*(Y) \qquad \text{by} \quad
 f_\#a := f_*(a\cdot[f]),
 \end{equation*}
 where $f_*\colon A^*(f)\to A^*(Y)$ is the proper push-forward operation
discussed in~$({\bf P}_2)$ on~p.~322 of~\cite{Fu}.
 \end{sbs}

\begin{prp}\label{prCom}
 Let $a,b\in A^*(X)$. Assume that $a$ is a polynomial in Chern classes
of vector bundles on $X$. Then $f_\#a\cdot f_\#b = f_\#b\cdot
f_\#a$.
 \end{prp}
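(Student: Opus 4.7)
The plan is to express $f_\# a\cdot f_\# b$ and $f_\# b\cdot f_\# a$ as pushdowns from the fibre product $P:=X\x_YX$ and then reduce the commutativity to the fact that Chern classes of vector bundles commute with arbitrary bivariant classes. Form the Cartesian square
\[\xymatrix{
P\ar[r]^{\pi_2}\ar[d]_{\pi_1} & X\ar[d]^{f}\\
X\ar[r]_{f} & Y
}\]
and set $\pi:=f\pi_1=f\pi_2$. Base change of orientation classes gives $\pi_1^*[f]=[\pi_2]$ and $\pi_2^*[f]=[\pi_1]$, hence $[\pi]=[\pi_1]\cdot[f]=[\pi_2]\cdot[f]$.

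The first and main step is the identity
\[f_\# a\cdot f_\# b=\pi_\#(\pi_1^*a\cdot\pi_2^*b),\]
valid for \emph{all} $a,b\in A^*(X)$, with no Chern-class hypothesis. Using $\pi_*=f_*\pi_{1*}$ and $[\pi_1]\cdot[f]=[\pi]$, the right-hand side equals $f_*\pi_{1*}(\pi_1^*a\cdot\pi_2^*b\cdot[\pi_1]\cdot[f])$. Two applications of the projection formula for $\pi_1$ rewrite this as $f_*\bigl(a\cdot\pi_{1*}(\pi_2^*b\cdot[\pi_1])\cdot[f]\bigr)$. Then the base-change identity $f^*f_\#b=\pi_{1*}(\pi_2^*b\cdot[\pi_1])$ (a special case of $f^*f_*\gamma=\pi_{1*}\pi_2^*\gamma$ for $\gamma\in A^*(f)$) and the compatibility $[f]\cdot c=f^*c\cdot[f]$ for $c\in A^*(Y)$ recognise this as $f_*(a\cdot[f]\cdot f_\# b)=f_\# a\cdot f_\# b$. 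The derivation requires careful bookkeeping of which bivariant group each factor lives in; this is the main obstacle in the proof.

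Next, the swap involution $\sigma\colon P\to P$ exchanging the factors satisfies $\pi_1\sigma=\pi_2$ and $\pi\sigma=\pi$, so $\sigma_*=\sigma^*$, $\sigma_*[\pi]=[\pi]$, and $\pi_*\circ\sigma_*=\pi_*$. Applying $\sigma_*$ under $\pi_\#$ therefore gives the unconditional symmetry
\[\pi_\#(\pi_1^*a\cdot\pi_2^*b)=\pi_\#(\pi_2^*a\cdot\pi_1^*b).\]
Combined with the key identity applied with $a$ and $b$ interchanged, the commutativity $f_\# a\cdot f_\# b=f_\# b\cdot f_\# a$ reduces to the equality $\pi_2^*a\cdot\pi_1^*b=\pi_1^*b\cdot\pi_2^*a$ in $A^*(P)$. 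Only here does the Chern-class hypothesis on $a$ enter: since $\pi_2^*a$ is a polynomial in Chern classes of (pullback) bundles on $P$, the standard axiom that Chern classes of vector bundles commute with every bivariant class forces $\pi_2^*a$ to commute with $\pi_1^*b$, completing the proof.
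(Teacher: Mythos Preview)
Your proof is correct and follows essentially the same route as the paper's. Both arguments pass to the fibre product $X\times_YX$, use the projection formula and base change for orientation classes to identify $f_\#a\cdot f_\#b$ with a pushdown of $p_1^*a\cdot p_2^*b$ (the paper's \eqref{eqpr5}), invoke the Chern-class hypothesis to commute $p_1^*a$ with $p_2^*b$, and then use the $p_1\leftrightarrow p_2$ symmetry to conclude; you make that last step explicit via the swap involution $\sigma$, whereas the paper simply says the projections may be interchanged.
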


\begin{proof}
 By $({\bf A}_{12})$ on~p.~323 of~\cite{Fu}, product and push-forward
commute; so
 \begin{equation*}\label{eqprCom1}
 f_\#a\cdot f_\#b = f_*(a\cdot[f]\cdot f_*(b\cdot[f])).
 \end{equation*}

Let $p_i\colon X\x_YX\to X$ denote the $i$th projection. Form
the diagram
\begin{equation*}\label{eqprCom2}
\begin{CD}
 X\x_YX @>1>> X\x_YX @>p_2>> X\\
 @VV p_1 V @VV p_1 V @VV f V\\
 X @>1>> X @>f>> Y @>1>> Y.
 \end{CD}
 \end{equation*}
 Apply the projection formula of $({\bf A}_{123})$ of
\cite[p.~323]{Fu} with $f:=f$, with $g:=f$, with $h:=1_Y$, with
$c:=[f]$ and with $d:=b\cdot[f]$. The result is
 \begin{equation*}\label{eqpr2}
 [f]\cdot f_*(b\cdot[f])
 = p_{1*}\bigl(f^*([f])\cdot b\cdot[f]\bigr).
 \end{equation*}

The definitions of $f^*$, of $[f]$, and of $p_2$ yield
$f^*([f])=[p_2]$. But Axiom $({\bf C}_2)$ on p.~320 of~\cite{Fu}
 yields
$[p_2]\cdot b = p_2^*(b)\cdot [p_2]$. Thus
$[f]\cdot f_*(b\cdot[f]) = p_{1*}(p_2^*(b)\cdot [p_2]\cdot[f])$.

Apply this projection formula again, but now with $f:= 1_X$, with $g:=
p_1$, with $h:=f$, with $c:= a$ and with $d:=p_2^*(b)\cdot
[p_2]\cdot[f]$. The result is
 \begin{equation*}\label{eqpr3}
 a\cdot p_{1*}\bigl(p^*_2(b)\cdot [p_2]\cdot[f]\bigr)
 = p_{1*}\bigl(p_1^*(a)\cdot (p_2^*(b)\cdot [p_2]\cdot[f])\bigr).
 \end{equation*}

The formula just before Proposition~17.4.1 on p.~327 of~\cite{Fu} yields
$[p_2]\cdot[f] = [f p_2]$. So the functoriality of pushforwards,
stated in $({\bf A}_2)$ on p.~323 of~\cite{Fu}, yields
 \begin{equation*}\label{eqpr4}
 f_*\bigl(p_{1*}\bigl(p_1^*(a)\cdot p_2^*(b)\cdot
 [p_2]\cdot[f]\bigr)\bigr)
 = (f p_1)_*\bigl(p_1^*(a)\cdot p_2^*(b)\cdot[f p_2])\bigr).
 \end{equation*}
 Putting it all together yields
 \begin{equation}\label{eqpr5}
 f_\#a\cdot f_\#b  = (f p_1)_*\bigl(p_1^*(a)\cdot p_2^*(b)\cdot[f p_2]\bigr).
 \end{equation}

By hypothesis, $a$ is a polynomial in Chern classes of vector bundles on
$F$. But product and pullback commute by property $({\bf A}_{13})$ on
p.~323 of~\cite{Fu}. So $p_1^*(a)$ is the same polynomial in the same Chern classes
of the pullbacks under $p_1$ of those vector bundles. But, as stated
just before Proposition~17.3.2 on p.~325 of~\cite{Fu}, Chern classes commute with all
bivariant classes. Thus $p_1^*(a)\cdot p_2^*(b) = p_2^*(b)\cdot p_1^*(a)$.

Of course, $f p_1=f p_2$. Thus
 \begin{equation*}\label{eqpr6}
 f_\#a\cdot f_\#b
 = (f p_2)_*\bigl(p_2^*(b)\cdot p_1^*(a)\cdot[f p_1]\bigr).
 \end{equation*}
 But $a$ and $b$ are arbitrary in \eqref{eqpr5}; moreover, $p_1$ and $p_2$
may be interchanged. So
 \begin{equation*}\label{eqpr7}
 (f p_2)_*\bigl(p_2^*(b)\cdot p_1^*(a)\cdot[f p_1]\bigr)
 = f_\#b\cdot f_\#a.
 \end{equation*}
 Thus $f_\#a\cdot f_\#b = f_\#b\cdot f_\#a$, as asserted.
 \end{proof}

Assume $g\colon Y'\to Y$, and consider $f'\colon X':=X\times_Y Y' \to Y'$. Then,
\begin{equation}\label{A23}
g^*f_\#=f'_\# g^* \colon\ A^*(X)\to A^*(Y').
\end{equation}
This property results from property $({\bf A}_{23})$ on p.~323 of
\cite{Fu} as follows: \begin{align*}\label{alA23}
 g^*f_\#(a) &= g^*f_*(a\cdot [f]) = f'_*g^*(a\cdot [f])
	 = f'_*(g^*(a)\cdot g^*[f])\\
	&= f'_*(g^*(a)\cdot [f']) = f'_\#(g^*(a)).
 \end{align*}

 Assume that $f\colon X\to Y$ and $g\colon Y\to Z$ are proper and that $[f]$,
$[g]$, and $[gf]$ exist. Then
 \[
 (gf)_\#=g_\# f_\#,
 \]
 since, by \cite[Section~17.4, p.~327]{Fu},
 \[
 (gf)_\#(a)=g_*f_*(a\cdot [gf])=g_*f_*(a\cdot [f] \cdot
	[g])=g_*(f_*(a\cdot [f])\cdot [g])=g_\#(f_\#(a)).
 \]

 \begin{sbs}{}{\bf Blowups.} 
 Let $\iota\colon W\to V$ be a closed, regular embedding of
codimension $d$. Then the orientation class $[\iota]$ is defined. Let
$V'$ denote the blowup of $V$ along $W$, with exceptional divisor~$E$. Then $E=\mathbb P\big(\nu^\vee\big)$, where $\nu$ is the normal bundle of
$W$ in $V$. Set $\xi :=c_1(\mc O_{V'}(-E))$. The map $f\colon E\to W$ is
flat, hence has an orientation class $[f]$. Then, by
\cite[Corollary~4.2.2, p.~75]{Fu}, for $k\ge 1$,
 \begin{equation}\label{blowup}
 f_\#\xi^k=-s_{k-d}\big(\nu^\vee\big),
 \end{equation}
 where $s\big(\nu^\vee\big)=c(\nu)^{-1}$ denotes the Segre class of $\nu^\vee$.
 \end{sbs}

 \begin{sbs}{}{\bf Derived classes.} \label{sbs:derived}
 Consider the setup of Section~\ref{sbsIndTr}. We have the Chern
classes $v:=c_1(\mc O_F(D))$, $w_j:=c_j\big(\Omega^1_{F/Y}\big)$ (for $j=1,2$),
$e:=c_1(\mc O_{F'}(E))\in A^*(F')$, where $E$ is the exceptional divisor
on $F^{\prime}$. Let $\beta\colon E\to \Delta\cong F$ also denote the
restriction of $\beta\colon F'\to F\times_YF$. Recall that $\beta_i\colon X_i\to
F$ and $\beta'_i\colon F_i\to F'$ denote the inclusions. Let $v, w_j, e \in
A^*(F')$ also denote their own pullbacks via $\vf'=p_1\beta$. As in
\cite{KP99,K--P}, we have
 \begin{equation*}
 c_1(\mc O_{F_i}(D_i))=(\beta'_{i})^*(v-ie),\qquad c_1\big(\Omega^1_{F'/F}\big)=w_1+e,\qquad
 \text{and}\qquad c_2\big(\Omega^1_{F'/F}\big)=w_2-e^2.
 \end{equation*}
We also have the following relations:
\begin{equation*}
 e^3+w_1e^2+w_2e=0,\qquad \beta_\#e=0,\qquad
 \text{and}\qquad \beta_\#e^2=-s_0\big(\nu^\vee\big).
\end{equation*}
 The first relation results from the relation
$e^2+w_1e+w_2=0$ on $E$, cf.\ \cite[Remark~3.2.4, p.~55]{Fu}; the
second and third, from equation~(\ref{blowup}).

Let $\iota\colon \Delta \to F\times_Y F$ denote the embedding, which is
regular of codimension $2$, since $\pi\colon F\to Y$ is smooth of relative
dimension $2$. As a class on $F\times_Y F$, we can write
$\beta_\#e^2=-\iota_\# s_0\big(\nu^\vee\big)$.

Set $y(a,b,c):=\pi_\#\big(v^aw_1^bw_2^c\big)\in A^*(Y)$. (Note, by Proposition~\ref{prCom}, multiplying these $y(a,b,c)$ is commutative.) The
corresponding classes for the map $\pi_i\colon F_i\to X_i$ (defined in
Section~\ref{sbsIndTr}) are
\[y_i(a,b,c):={\pi_i}_\#(\beta'_i)^*\big((v-ie)^a(w_1+e)^b\big(w_2-e^2\big)^c\big).\] By
(\ref{A23}), $\beta_i^* \pi'_\# =\pi_{i\#} {\beta'_i}^*$; hence,
 \[
 y_i(a,b,c)=\beta_i^*\pi'_\# \big((v-ie)^a(w_1+e)^b\big(w_2-e^2\big)^c\big).
 \]
We have $\pi'=p_2 \beta$, and
 \begin{align*}
 \beta_\# \big((v-ie)^a(w_1+e)^b\big(w_2-e^2\big)^c\big)&= \beta_\#\big(v^a w_1^b
 w_2^c-Q(i;a,b,c)e^2\big) \\
 &= p_{1}^*\big(v^a w_1^b w_2^c\big)+p_1^*Q(i;a,b,c)\cdot
 \iota_\#s_0\big(\nu^\vee\big),
 \end{align*}
 where $Q(i;a,b,c)$ is the (weighted homogeneous, degree $a+b+2c-2$)
polynomial in $v$, $w_1$, $w_2$ returned by the function in Algorithm~2.3 in \cite[p.~72]{K--P}. Hence, since $[\pi']=[p_2
\beta]=[\beta]\cdot [p_2]$ by \cite[Section~17.4, p.~327]{Fu}), we get
 \[
 \pi'_\# \big((v-ie)^a(w_1+e)^b\big(w_2-e^2\big)^c\big)=p_{2\#}p_{1}^*\big(v^a w_1^b
 w_2^c\big)+p_{2\#}\bigl(p_{1}^*Q(i;a,b,c)\cdot
 \iota_\#s_0\big(\nu^\vee\big)\bigr).
 \]
 Since $p_1\iota=p_2\iota$, we have
 \[ p_{1}^*Q(i;a,b,c)\cdot \iota_\#s_0\big(\nu^\vee\big)
 = \iota_\#\big(\iota^*p_{1}^*Q(i;a,b,c)\cdot s_0\big(\nu^\vee\big)\big)
 = p_{2}^*Q(i;a,b,c)\cdot \iota_\# s_0\big(\nu^\vee\big). \]
 Since $s_0(\nu^\vee)=c_0(\nu)=:1_\Delta\in A^*(\Delta)$ is the class
that acts as identy on $A_*(\Delta)$ and since $p_2\iota$ is an
isomorphism, we get $p_{2\#}\iota_\#s_0\big(\nu^\vee\big)=1_F$. Since
$p_{2\#}p_1^*= \pi^*\pi_\#$, we obtain
 \begin{equation*}
 \pi'_\# \big((v-ie)^a(w_1+e)^b\big(w_2-e^2\big)^c\big)
 =\pi^*\pi_\#v^a w_1^b w_2^c+Q(i;a,b,c).
 \end{equation*}
 Thus we have
 \begin{equation}\label{derive}
 y_i(a,b,c)=\beta^*_i(\pi^*y(a,b,c)+Q(i;a,b,c)).
 \end{equation}
 \end{sbs}

\section{The main theorem}\label{sc:thm}
 Fix a smooth projective family of surfaces
$\pi\colon F\to Y$, and a relative effective divisor $D$ on $F/Y$. For each
$r\ge1$, we introduce a natural cycle $U(D,r)$ on $Y$ that enumerates
the fibers $D_y$ with~$r$ nodes. Our first goal is to prove
Proposition~\ref{prp611}, which gives a recursive relation for the class
$u(D,r):=[U(D,r)]$ in terms of the classes $u(D_i,r_i)$ of the induced
pairs $(F_i/X_i,D_i)$ introduced in Section~\ref{sbsIndTr}. This
relation is the key to the proof of our main theorem, Theorem~\ref{th:5-1}.

\begin{dfn}\label{dfnUr} Fix $r\ge1$. Form the direct image on $Y$ of
the fundamental cycle $[G(F/Y,D;r)]$, remove the part supported in
$Y(\infty)$, and denote the result by $U(D,r)$. In other words,
$U(D,r)$ is obtained as follows. For each generic point $z$ of
$G(F/Y,D;r)$, let $n_z$ be the length of $\mc O_z$ over~$\mc O_{q(z)}$
provided this length is finite and $q(z)\notin Y(\infty)$; otherwise,
let $n_z$ be $0$. Let $\overline{\{q(z)\}}$ be the closure of
$\{q(z)\}$.
 \begin{equation*}\label{equUr}
 U(D,r):=\sum_zn_z\overline{\{q(z)\}}.
 \end{equation*}
In addition, let $U(D,0)$ denote the fundamental cycle of $Y$, and set
$U(D,-r):=0$.

Finally, set $u(D,r):=[U(D,r)]$. It's a class on $Y$.
\end{dfn}

\begin{prp}\label{prp610}
 Fix $r\ge1$. Assume that
the pair $(F/Y,D)$ is $r$-generic.
Then $U(D,r)$ has pure codimension $r$, and its support is just
the closure of $Y(r\I A_1)$.
\end{prp}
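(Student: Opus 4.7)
First, observe that $Y(r\I A_1)\cap Y(\infty)=\varnothing$, since curves with diagram $r\I A_1$ are reduced. Combining $r$-genericity with the bound $\cod_y(Y(r\I A_1),Y)\le \cod(r\I A_1)=r$ from Lemma~\ref{lemCnstr}, we see that $\overline{Y(r\I A_1)}$ has pure codimension $r$ in $Y$. It therefore suffices to show that $U(D,r)$ is supported exactly on $\overline{Y(r\I A_1)}$ and that every irreducible component of $\overline{Y(r\I A_1)}$ contributes with positive multiplicity.

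Set $G:=G(F/Y,D;r)=G(r\I A_1)$ and let $q\colon G\to Y$ be the structure map. Lemma~\ref{lemCnstr} gives $\cod_z(G,Q(r\I A_1))\le 3r$ for every $z\in G$, while $Q(r\I A_1)$ is $Y$-smooth of relative dimension $2r$. Hence, for any component $G'$ of $G$ with generic point $z$ and $y:=q(z)$, one has $\dim_z G'\ge \dim_y Y-r$. The key claim is: if $y\notin Y(\infty)$, then $y\in Y(r\I A_1)$. Indeed, $D_{\bar y}$ is then reduced and so has finitely many singular points, so the geometric fiber of $q$ over $\bar y$, which parameterizes unordered $r$-tuples of distinct singular points of $D_{\bar y}$, is finite. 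Thus $q|_{G'}$ is generically finite onto $\overline{q(G')}$, yielding $\cod_Y \overline{q(G')}\le r$. Let $\I D''$ be the diagram of $D_{\bar y}$. By the proof of Lemma~\ref{lemCnstr} there is a weight-respecting injection $r\I A_1\into\I D''$, which is an isomorphism exactly when $y\in Y(r\I A_1)$. By the standard monotonicity of $\cod$ under proper injections of minimal Enriques diagrams, $\I D''\ne r\I A_1$ would force $\cod\I D''\ge r+1$; then $r$-genericity would give $\cod_y(Y(\I D''),Y)\ge r+1$, so $\cod_Y \overline{q(G')}\ge r+1$, contradicting the bound above. So $\I D''=r\I A_1$, and $\overline{q(G')}$ is a component of $\overline{Y(r\I A_1)}$ of codimension exactly $r$.

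For the positive-multiplicity claim, at a generic point $y$ of any component of $Y(r\I A_1)$ the curve $D_y$ has exactly $r$ ordinary nodes, and these determine a unique point $z$ of $G$ over $y$; the argument above identifies the component of $G$ through $z$ as dominating the given component of $\overline{Y(r\I A_1)}$, and $q$ is generically finite on this component, so $n_z\ge 1$. Altogether, $U(D,r)$ is supported exactly on $\overline{Y(r\I A_1)}$ and has pure codimension $r$. I expect the main technical point to be the dimension bookkeeping that combines the codimension bound in $Q(r\I A_1)$ from Lemma~\ref{lemCnstr} with $r$-genericity applied across the different possible diagrams $\I D''$; by contrast, the monotonicity of $\cod$ on minimal Enriques diagrams is a standard input from the theory.
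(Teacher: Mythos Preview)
Your proof is correct and follows essentially the same route as the paper's: both use the codimension bound $\cod_z(G,Q(r\I A_1))\le 3r$ from Lemma~\ref{lemCnstr} (equivalently, from the description of $\Hilb^{3r}_{D/Y}$ as the zero locus of a section of a rank-$3r$ bundle) together with the $Y$-smoothness of $Q(r\I A_1)=H(r)$ of relative dimension $2r$, and then invoke $r$-genericity to pin down the image. The paper's proof is extremely terse, essentially just pointing to these two ingredients; you have written out the argument in full, including the converse direction (every component of $\overline{Y(r\I A_1)}$ is hit with positive multiplicity), which the paper leaves entirely to the reader.

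One small point of phrasing: what you call ``the standard monotonicity of $\cod$ under proper injections of minimal Enriques diagrams'' is not quite a general theorem in that form, and you do not need it. All that is required is the special fact that if $\I D''$ admits a weight-respecting injection from $r\I A_1$ and $\I D''\ne r\I A_1$, then $\cod\I D''\ge r+1$. This is immediate from the additivity of $\cod$ over connected components of a minimal diagram and the fact that a single connected component contributes at least $1$, with equality exactly for $\I A_1$ (a root of weight $\ge 3$ already contributes $\binom{4}{2}-2=4$, and any non-$A_1$ double point has a further vertex in its diagram and hence contributes $\ge 2$). Stating it this way avoids appealing to a monotonicity principle that would require more care to formulate in general.
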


\begin{proof}
This follows from the second part of Lemma~\ref{lemCnstr}, with $\I D=r\I A_1$:
Let $z$ be a generic point of $G(F/Y,D;r)$. Then
$\cod_z(G(F/Y,D;r),H(r)) \le 3r$ because $\Hilb^{3r}_{D/Y}$ is the
zero scheme of a section of a locally free sheaf of rank $3r$ on
$\Hilb^{3r}_{F/Y}$ by \cite[Proposition~4, p.~5]{AIK}.
\end{proof}

\begin{prp}\label{prp611} Fix $r\ge1$. Then the following formula holds:
\begin{equation}\label{eq:4-41}
u(D,r) = \frac{1}{r} \pi_\#\sum_{i\ge 2}(-1)^i {\beta_i}_\#u(D_i,r_i),
\end{equation}
 where $\beta_i \colon X_i \to F$ denote the inclusions.
\end{prp}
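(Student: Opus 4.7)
The plan is to verify the identity as codimension-$r$ classes in $A^r(Y)$ by exhibiting both sides as pushforwards to $Y$ of a single codimension-$(r+2)$ class on $F$, computed via a universal incidence on $G:=G(F/Y,D;r)$. Each summand $\pi_\#(\beta_i)_\#u(D_i,r_i)$ has codimension $r_i+\binom{i+1}{2}-2=r$ on $Y$: by Corollary~\ref{lm66}, $\beta_i$ is a regular embedding of codimension $\binom{i+1}{2}$ off $Y(\infty)$, so $(\beta_i)_\#$ raises codimension by $\binom{i+1}{2}$, while $\pi_\#$ drops it by $2$; the LHS is codim-$r$ by Proposition~\ref{prp610}. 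Pick the dense open $U\subset Y$ from Lemma~\ref{lemZiso1}, whose complement has codimension $\ge r+1$ and on which $Y(\infty)=\varnothing$. Since a codim-$r$ class on $Y$ is determined by its restriction away from a codim $\ge r+1$ subset, we replace $Y$ by $U$ throughout.

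Next, introduce the incidence. Since $G\subset Q(r\I A_1)=H(r)$ (Section~\ref{sbsHZr}), pulling back the universal $r$-tuple from $H(r)$ yields a closed subscheme $W\subset G\x_YF$, étale of degree $r$ over $G$ via the first projection $p_1$; the second projection $p_2\colon W\to F$ factors through $X_2$. Since $q\colon G\to Y$ is generically injective onto $\overline{Y(r\I A_1)}$ and $p_{1*}[W]=r[G]$, one has $q_*p_{1*}[W]=r\cdot u(D,r)$. As $\pi\circ p_2=q\circ p_1$ on $W$, this gives
\[
r\cdot u(D,r)=\pi_*p_{2*}[W],
\]
and it suffices to establish the identity
\[
p_{2*}[W]=\sum_{i\ge 2}(-1)^i(\beta_i)_\#u(D_i,r_i)\qquad\text{in }A^{r+2}(F).
\]

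Stratify $p_2(W)\subset F$ by the multiplicity filtration $X_2\supset X_3\supset\cdots$. Over the stratum $U_i=(X_i\setminus X_{i+1})\cap\pi^{-1}U$, the scheme-theoretic analysis inside the proof of Lemma~\ref{lemZiso1} identifies the fiber of $W$ at $x\in U_i$ with the data of $x$ together with a configuration of the remaining $r-1$ singular points of $D_{\pi(x)}$; blowing up at $x$ and taking the strict transform converts this data into an $H_i$-point over $x$, and iterating the isomorphism $\gamma_i\colon H_i\x_FU_i\risom H_{i-1}\x_FU_i$ (Lemma~\ref{lemZiso1}) together with the openness statement of Lemma~\ref{lemZiso2} matches the restriction of $W$ over $U_i$ with the image in $X_i$ of $H_i\x_FU_i$. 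The naive pushforward from $X_2$ overcounts at triple points, is overcorrected at quadruple points, and so on along the filtration; this classical inclusion-exclusion pattern produces the signs $(-1)^i$ via the telescoping $[X_i\cap\pi^{-1}U]=\sum_{j\ge i}\bigl([X_j\cap\pi^{-1}U]-[X_{j+1}\cap\pi^{-1}U]\bigr)$ applied to the class-level description of $W$ stratum-by-stratum. Applying $\pi_\#$ and dividing by $r$ then yields the asserted formula.

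The main obstacle is the rigorous execution of this last step: converting the stratum-by-stratum geometric identification into a class-level identity in $A^{r+2}(F)$, and verifying that each excess from $X_{i+1}\subset X_i$ is accounted for by precisely $(-1)^{i+1}(\beta_{i+1})_\#u(D_{i+1},r_{i+1})$. This hinges on the local complete intersection structure of $X_i$ off $Y(\infty)$ supplied by Corollary~\ref{lm66}, so that $[\beta_i]$ and $(\beta_i)_\#$ are well defined and compatible with the telescoping; Lemma~\ref{lemZiso2} then propagates the identifications across the nested strata. The bivariant formalism of Section~\ref{sec:inters}, in particular Proposition~\ref{prCom} and the base-change identity~\eqref{A23}, handles the cohomological bookkeeping once the geometric input of Lemmas~\ref{lemZiso1} and~\ref{lemZiso2} is installed.
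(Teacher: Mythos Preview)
Your approach via the universal incidence $W\subset G\times_YF$ is essentially the same as the paper's: the paper phrases it as an $r:1$ map from the components of $U(D_2,r-1)$ supported on $X_2\smallsetminus X_3$ to $U(D,r)$, which is exactly the content of $q_*p_{1*}[W]=r\cdot u(D,r)$ together with the stratification of $p_2(W)$ by the $X_i$. Both arguments hinge on Lemmas~\ref{lemZiso1} and~\ref{lemZiso2} to pass from set-theoretic to cycle-level identifications, and you correctly identify these as the key inputs.

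However, your final step is misstated and, as you yourself acknowledge, not carried out. The telescoping identity $[X_i]=\sum_{j\ge i}\bigl([X_j]-[X_{j+1}]\bigr)$ is \emph{not} the relevant one and does not by itself produce the alternating sum of the $u(D_i,r_i)$. The actual mechanism is that, via the isomorphism $\gamma_i$ of Lemma~\ref{lemZiso1} and the openness of Lemma~\ref{lemZiso2}, the restriction of the cycle $U(D_{i-1},r_{i-1})$ to $U_i=X_i\smallsetminus X_{i+1}$ coincides with the restriction of $U(D_i,r_i)$ there; iterating gives $U(D_2,r-1)|_{X_2\smallsetminus X_3}=\sum_{i\ge2}(-1)^i U(D_i,r_i)$ as cycles on $X_2$, and pushing forward yields the formula. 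This is what the paper's proof says in its last two paragraphs. Your invocation of Proposition~\ref{prCom} and~\eqref{A23} is unnecessary here: no bivariant bookkeeping is needed once the cycle-level identification is in hand, since everything takes place at the level of cycles of a fixed dimension on the equidimensional $X_i$. In short, you have the right ingredients but the wrong telescoping, and the closing paragraph gestures at machinery that plays no role.
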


\begin{proof}
 Notice that the sum in \eqref{eq:4-41} is finite, as
$r_i=r-\binom{i+1}{2}+2$ by \eqref{eq61b} and as $U(D,s)=0$ for $s<0$ by
Definition~\ref{dfnUr}.

 Let's first explain set-theoretically why the formula should hold.
Consider a closed point $y\in Y(r)$. The curve $D_y$ has precisely $r$
nodes, and if we blow up one of the nodes, $x\in X_2$ say, the strict
transform $(D_2)_x$ has $r-1$ nodes. Hence, above $y\in Y(r)$, we get
$r$ points $x\in X_2(r-1)$. But not all $r-1$-nodal curves of $D_2/X_2$
arise in this way: if $x\in X_3$ is an (ordinary) triple point of
$D_{\pi(x)}$, then $(D_2)_x=(D_3)_x+E_x$, hence it has the 3 nodes
$(D_3)_x\cap E_x$. Therefore, set-theoretically, $X_2(r-1)$ consists of
two parts, one mapping $r:1$ to $Y(r)$, the other mapping $1:1$ to $Y(\I
D_4+(r-4)\I A_1)$. The second part is equal to the part of $X_3(r-4)$
not contained in $X_4$. The part contained in $X_4$ is equal to
$X_4(r-8)$ minus the part contained in $X_5$, and so on.

The fact that this reasoning is valid on the cycle level is precisely
what Lemma~\ref{lemZiso2} shows. It~therefore only remains to show that
there is a natural map
\[
G(F_2/X_2,D_2;r-1)\smallsetminus G(F_3/X_3,D_3;r-4)\to G(F/Y,D;r)
\]
 which is $r:1$. If $\mc Z'\to T$ is a
$T$-point of $G(F_2/X_2,D_2;r-1)\smallsetminus G(F_3/X_3,D_3;r-4)$
(i.e., a family of $r-1$ double points in $D_2\subset F_2$ over $T$,
none of which lie on $E$), we send it to the image ${\vf'}(\mc Z'\cup
2E)$ in $F$ to get a family over $T$ of $r$ double points in $D$. This
map induces an $r:1$ map from the components of the cycle $U(D_2,r-1)$
that are supported on $X_2\smallsetminus X_3$ to $U(D,r)$.
 \end{proof}

Notice that the case $r=8$ of \eqref{eq:4-41} looks different from (4.6)
on p.~230 of \cite{KP99}.
Indeed, in~\cite{KP99}, we used $u(D_2,7)$ to denote
what we denote by $\frac{1}{7!} P_7(a_\bullet(D_2))\cap [X_2]$ here.
However, the mathematics is consistent.

Our main result is Theorem~\ref{th:5-1}, the first part of which was
proved in \cite{K--P}. We now prove the last part of the theorem, namely
the part concerning the expression for $u(D,r)=[U(D,r)]$, where $U(D,r)$
is the cycle introduced in Definition~\ref{dfnUr}.

Recall from Section~\ref{sc:IndFam} that for a given pair $(F/Y,D)$, the
subscheme $X_i\subset F$ denotes the scheme of zeros of the natural
section $\sigma_i$ of $\mathcal P^{i-1}_{F/Y}(D)$. After making some
simplifications, we prove the theorem when $X_4=\varnothing$. This proof
is easy, and it yields the case $r\le 7$. We then consider the case
$r=8$, which is more difficult due to the presence of nonreduced fibers
in codimension $r_2=7$ in the family of curves of the induced pair
$(F_2/X_2,D_2)$.

\begin{thm}[Main] \label{th:5-1}
Let $\pi\colon F\to Y$ be a smooth
 projective family of surfaces, and $D$ a relative effective divisor.
 Assume $Y$ is Cohen--Macaulay and equidimensional.
Fix an integer $r\ge 0$,
and assume
 \begin{enumerate}\itemsep=0pt
 \item[$(i)$] if\/ $Y(\infty)\ne\varnothing$, we have $\cod
 Y(\infty)\ge r+1$,
 \item[$(ii)$]
 the pair $(F/Y,D)$ is $r$-generic.
 \end{enumerate}
 Then either $Y(r\I A_1)$ is empty, or it has pure codimension $r$; in
either case, its closure $\overline{Y(r\I A_1)}$ is the support of a
natural nonnegative cycle $U(D,r)$.

Let $b_s(D)$ be the polynomial in $v$, $w_1$, $w_2$ output by \/ {\rm
Algorithm 2.3} in {\rm \cite{K--P}}, set $a_s(D):=\pi_\#b_s(D)$, and let $P_r$
be the $r$th Bell polynomial. Assume $r\le 8$, and if $r=8$, then
$(F/Y,D)$ is strongly $8$-generic. Then the rational equivalence class
$u(D,r):=[U(D,r)]$ is given by the formula
 \[
 u(D,r)= \frac{1}{r!}P_r(a_1(D),\ldots,a_r(D))\cap [Y].
 \]
 \end{thm}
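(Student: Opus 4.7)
The plan is to proceed by induction on $r$, using Proposition~\ref{prp611} as the engine and the derivation formula~\eqref{derive} as the bookkeeping device between the classes $a_s(D)$ on $Y$ and the classes $a_s(D_i)$ on $X_i$. The base case $r=0$ is immediate from Definition~\ref{dfnUr} since $U(D,0)=[Y]$ and $P_0=1$; the case $r=1$ reduces to $u(D,1)=\pi_\#[X_2]$ off $Y(\infty)$, which matches $a_1(D)=\pi_\#b_1(D)$ by direct computation of the top Chern class of $\mc P^1_{F/Y}(D)$ produced by Algorithm~2.3 of~\cite{K--P}.

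For the inductive step in the range $r\le 7$, I first invoke Proposition~\ref{lm:4-3} to conclude that each induced pair $(F_i/X_i,D_i)$ appearing on the right-hand side of~\eqref{eq:4-41} is $r_i$-generic with $r_i<r$. The induction hypothesis then gives
\[
u(D_i,r_i)=\frac{1}{r_i!}\,P_{r_i}(a_1(D_i),\dots,a_{r_i}(D_i))\cap[X_i].
\]
I substitute this into the recursion, expand each $a_s(D_i)=\pi_{i\#}b_s(D_i)$ via~\eqref{derive}, and push down via $\pi_\#\beta_{i\#}$ using the identities $\beta_\#e=0$ and $\beta_\#e^2=-s_0(\nu^\vee)$ collected in Section~\ref{sbs:derived}. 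The desired identity then reduces to the standard Bell recursion
\[
P_r(a_1,\dots,a_r)=\sum_{k\ge 1}\binom{r-1}{k-1}a_k\,P_{r-k}(a_1,\dots,a_{r-k})
\]
combined with a universal polynomial identity among the $b_s$ and the polynomials $Q(i;a,b,c)$. By design, Algorithm~2.3 of~\cite{K--P} manufactures the $b_s$ precisely so this identity holds, which settles $r\le 7$.

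The main obstacle is $r=8$. Here $r_2=7$, but to apply the induction to $(F_2/X_2,D_2)$ one would need $7$-genericity; above the ordinary quadruple-point locus $X_4\subset X_2$ the fiber of $D_2$ contains $E$ and is nonreduced, so that $\beta_2^{-1}X_4$ plays the role of $Y(\infty)$ for the induced pair and Proposition~\ref{lm:4-3} fails there. My plan is to apply the $r\le 7$ formula on $X_2\smallsetminus X_4$ and record the resulting discrepancy in~\eqref{eq:4-41} as a correction cycle $\Gamma$ supported on $X_4$. By Corollary~\ref{lm66}, $X_4$ has the expected codimension, so $\Gamma$ has the correct dimension to be written as $C\cdot\beta_{4\#}[X_4]$ for a rational number $C$; the strong $8$-genericity hypothesis enters precisely at the flatness step of Lemma~\ref{lm:versal} that underlies this description. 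The bivariant framework of Section~\ref{sec:inters}, developed over an arbitrary universally catenary base, then allows one to spread the computation over the versal deformation of an ordinary quadruple point and thereby show that $C$ is universal, depending only on the local analytic model and not on $(F/Y,D)$. Finally $C$ is pinned down by comparing both sides of the asserted formula in the explicit computation of $8$-nodal quintic plane curves in~\cite[Example~3.8, p.~80]{K--P}, completing the proof.
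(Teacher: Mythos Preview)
Your outline matches the paper's proof almost exactly: induction via Proposition~\ref{prp611}, the derivation identity~\eqref{derive}, the Bell binomial/recursion identities, and for $r=8$ a correction supported on $X_4$ whose coefficient is shown universal through the versal deformation (Lemma~\ref{lm:versal}, Theorem~\ref{constant}) and then evaluated on \cite[Example~3.8]{K--P}. Two points need tightening, though.

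First, for the step $r\le 7$ you only invoke Proposition~\ref{lm:4-3} to get $r_i$-genericity of $(F_i/X_i,D_i)$, but the induction hypothesis is the full theorem: you must also check that $X_i$ is Cohen--Macaulay and equidimensional and that hypothesis~$(i)$ holds, i.e., $\cod_{X_i} X_i(\infty)\ge r_i+1$. The paper secures all of this by first trimming $Y$: remove $Y(\infty)$ and every $Y(\mathbf D)$ with $\cod\mathbf D\ge r+1$, so that (for $r\le 7$) one has $X_4=\varnothing$ and hence $X_i(\infty)=\varnothing$ for $i=2,3$; the expected-codimension statement from Corollary~\ref{lm66} then makes $X_i$ a local complete intersection in the Cohen--Macaulay $F$, hence itself CM and equidimensional. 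Without this reduction your inductive call is not yet licensed.

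Second, a small misattribution in the $r=8$ paragraph: Proposition~\ref{lm:4-3} does \emph{not} fail over $X_4$; $(F_2/X_2,D_2)$ is indeed $7$-generic. What fails is hypothesis~$(i)$: $X_2(\infty)=X_4$ has codimension exactly $7=r_2$ in $X_2$ (because $D_2|_{F_4}=D_4+2E$, not merely $D_4+E$). The correction term is then the class on $X_2$ given by $\frac{1}{7!}P_7(a_\bullet(D_2))\cap[X_2]-u(D_2,7)$, and the nontrivial content of Theorem~\ref{constant} is that the coefficient of $[X_4]$ is the \emph{same} on every component of $X_4$; this is why one must pass to the versal deformation and then exhibit a single family in which $X_4$ is irreducible yet realizes every cross ratio.
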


\begin{proof}
 First of all, we may assume $Y(\infty)=\varnothing$. Indeed, $\cod
Y(\infty)\ge r+1$ by hypothesis. Hence we may replace $Y$ by
$Y\smallsetminus Y(\infty)$, and thus assume that all the fibres of $\pi
|_D\colon D\to Y$ are reduced.

Second, $\cod X_i=\binom{i+1}2$ for $i=2,3,4$ by Corollary~\ref{lm66}. Therefore, $X_i$ is a local complete intersection in $F$,
and $F$ is smooth over the Cohen--Macaulay scheme $Y$, hence is
Cohen--Macaulay, and so $X_i$ is too. Since $Y$ is equidimensional, so
is $F$, and hence so is $X_i$.

By \cite[Lemma~2.4, p.~73]{K--P} there are at most finitely many $\I
D$ such that $Y(\I D)$ is nonempty; hence, we may remove all $Y(\I D)$
with $\cod Y(\I D) \ge r+1$. If $x\in X_{i}$ is a closed point, $i\ge
5$, then $D_{\pi(x)}$ contains a point of multiplicity at least $i$;
hence, the minimal Enriques diagram $\I D$ of $D_{\pi(x)}$ satisfies
$\cod \I D \ge \binom{i+1}2-2\ge 13$ by the formula for $\cod \I D$ in
\cite[p.~217]{KP99}. Therefore, $\cod Y(\I D)\ge 13 > r$, since $r\le
8$. Hence $X_i=\varnothing$ for $i\ge 5$. If $x\in X_4$, then $x\in
D_{\pi(x)}$ has multiplicity $\ge 4$; hence, $\cod \I D \ge 8$. If
$\cod \I D \ge 9$, then $\cod Y(\I D)\ge 9$; so $Y(\I D) =
\varnothing$. Hence we have $\cod \I D = 8$. But the only diagram with a
root of multiplicity 4 and codimension~8 is the diagram $X_{1,0}$
corresponding to an ordinary quadruple point; see \cite[Figures~2--6, p.~218]{KP99}.

The recursive formula of Proposition~\ref{prp611} applies. It gives, for
$r\le 8$,
\[ ru(D,r)=\pi_\#\sum_{i=2}^{4}(-1)^i\beta_{i\#}u(D_i,r_i),
 \]
 where $r_2=r-1$, $r_3=r-4$ and $r_4=r-8$.

\begin{prp} The theorem holds if $X_4=\varnothing$ and
$Y(\infty)=\varnothing$.
 \end{prp}

\begin{proof}
The proof is by induction on $r$.
 For $r=1$, we have
\begin{align*}\label{alpr5.5}
 u(D,1) &= {\pi}_\#{\beta_2}_\#u(D_2,0) = \pi_{\#}[X_{2}]
 = \pi_{\#}x_{2}\cap [Y] \\
 &= \pi_\#b_{1}(D)\cap [Y]=a_1(D)\cap [Y]=P_1(a_1(D))\cap [Y].
\end{align*}

 Assume next that $r\ge 2$ and that the theorem holds for all
 families verifying the hypotheses of the theorem with $r$ replaced by
$r' < r$. In particular, the statement then holds for the induced pairs
$(F_i/X_i,D_i)$, for $i=2,3$, defined in
Section~\ref{sbsIndTr}. Indeed, $X_i(\infty)=\varnothing$, and
$(F_i/X_i,D_i)$ is $r_i$-generic by Proposition~\ref{lm:4-3}; that is, $(ii)$
of the theorem holds with $r$ replaced by $r_i$.

To simplify the notation, let us write
\begin{equation}\label{bell}
P_m(z_\bullet):=P_m(z_1,\dots,z_m),
\end{equation}
where $P_m$ is the
$m$th Bell polynomial and $z_1,\dotsc,z_m$ are variables. Then
we get
 \[
 r!u(D,r)=\pi_{\#}\bigl(\beta_{2\#}P_{r-1}(a_{\bullet}(D_{2}))
 -(r-1)!/(r-4)!\beta_{3\#}P_{r-4}(a_{\bullet}(D_{3}))\bigr)\cap [Y].
 \]
 By definition, $a_{s}(D_{i})=\pi_{i\#} b_{s}(D_{i})$. By~applying (\ref{derive}) to the polynomials $b_s(D)$ (cf.\ \cite[Algorithm~2.3]{K--P}),
\begin{equation*} \beta_{i\#}
P_{m}(a_{\bullet}(D_{i}))
=P_{m}(\pi^*a_{\bullet}(D)+Q(i,b_{\bullet}(D)))\cdot x_{i}(D).
\end{equation*}
By the binomial property of the Bell
polynomials \cite[equation~(4.9), p.~265]{Bell}, we have
 \begin{equation*}
 P_{m}(\pi^*a_{\bullet}(D)+Q(i,b_{\bullet}(D)))
 =\sum_{k=0}^m \binom{m}{k}
 P_{m-k}(\pi^*a_\bullet(D))P_{k}(Q(i,b_{\bullet}(D))).
 \end{equation*}
 Plugging this in and using the definition of $b_{s}(D)$ and $a
_{s}(D)$, we get
\[
r!u(D,r)=\sum_{k=0}^{r-1} \binom{r-1}{k}
P_{r-1-k}(a_\bullet(D))a_{k+1}(D)\cap [Y]=P_{r}(a_{\bullet}(D))\cap
[Y],
\] where the last equality follows from the recursive property of
the Bell polynomials \cite[equation~(4.2), p.~263]{Bell}.
 \end{proof}

When $r\le 7$, we can remove all $Y(\I D)$ with $\cod \I D\ge 8$. If $\I
D$ contains a root of multiplicity $\ge 4$, then $\cod \I D \ge
\binom{4+1}{2}-2=8$, hence we may assume $X_4=\varnothing$. This
proves the theorem for $r\le 7$.

Assume $r=8$. By~Proposition~\ref{prp611}, we have
\[
8 u(D,8)= \pi_{\#}\bigl( \beta_{2\#}u(D_{2},7)-
 \beta_{3\#}u( D_{3},4)+ \beta_{4\#}u( D_{4},0)\bigr).
 \]
 The induced pairs $(F_i/X_i,D_i)$, $i=3,4$ satisfy the conditions of
the theorem, with $r$ replaced by~$r_i$,
 hence, by the case $r\le7$ of the theorem:
 \[
 u(D_3,4)=\frac{1}{4!}P_4(a_\bullet(D_3)) \cap [X_3]
 \qquad \text{and}\qquad u(D_4,0)=[X_4].
 \]
 Note that, since $F$ is Cohen--Macaulay, $[X_i]=x_i \cap [F]$ with
$x_i$ as in Algorithm~2.3 in~\cite{K--P}. The induced pair
$(F_2/X_2,D_2)$ does not satisfy the conditions for $r$ replaced by
$r_2$. Indeed, note that $D_2|_{F_4}=(D-2E)|_{F_4} = (D-4E+2E)|_{F_4} =
D_4 + 2E|_{F_4}$ and that $D_4$ is relative effective on~$F_4/X_4$ by
Lemma~\ref{lm:4-2}; hence, $D_2$ has nonreduced fibers above~$X_4$. So
$X_2(\infty)=X_4$, and hence has codimension $r_2=7$ in~$X_2$.

However, from what we have seen above, if we restrict the family $F_2\to
X_2$ to $X_2\smallsetminus X_4$, then $\frac{1}{7!}P_7(a_\bullet(D_2))
\cap [X_2\smallsetminus X_4]$ is the class of the 7-nodal curves of that
family. So the difference
 \begin{equation}\label{eq:TheDiff}
 \frac{1}{7!}P_7(a_\bullet(D_2))\cap [X_2] - u(D_{2},7)
 \end{equation}
 is the \emph{correction term} we are looking for. It is the class of a
cycle of codimension 7, supported on the codimension 7 subscheme $X_4$
of $X_2$. As Theorem~\ref{constant} shows, \eqref{eq:TheDiff} is equal to
$C[X_4]$, where the constant $C$ is an integer, which is independent of
the given pair $(F/Y,D)$. Hence it suffices to compute $C$ in any
particular case; for example, in \cite[Example~3.8, p.~80]{K--P}, we
worked out the case of $8$-nodal quintic plane curves, and found $C =
3280$. Thus Theorem~\ref{th:5-1} is proved. \end{proof}

 \begin{rmk}
 Assume $(F/Y,D)$ is the direct sum of
 two pairs $(F'/Y, D')$ and $(F''/Y, D'')$ over the same base. Then
 the $r$-nodal curves of $D\to Y$ consists of the unions of the
$(r-i)$-nodal curves of $D'\to Y$ and
 the $i$-nodal curves of $D''\to Y$ for $i=0,\dots,r$. Hence, the
existence of a~universal polynomial for $r$-nodal curves implies that
the generating series for $(F/Y,D)$ is equal to the product of the
generating series for $(F'/Y,D')$ and $(F''/Y, D'')$. This fact was
observed by G\"ottsche in the case of a trivial family
\cite[p.~525]{gottsche}, and by Laarakker in the general case
 \cite[Section~5.1, p.~4935]{Laarakker}. In the case of a trivial family,
G\"ottsche observed that this multiplicativity implies that the
universal polynomials are Bell polynomials. However, as observed by
Laarakker, this conclusion does not follow in the case of a nontrivial
family.

Let $a_j(D)$, $a_j(D')$, $a_j(D'')$ be the classes, introduced in
Theorem~\ref{th:5-1}, for the three pairs. Clearly,
$a_j(D)=a_j(D')+a_j(D'')$. So (for $r\le 8$),
in the notation of~\eqref{bell},
 \[
 u(D,r)=\frac{1}{r!} P_r(a_\bullet(D)) \cap [Y]
 =\frac{1}{r!} P_r(a_\bullet(D')+a_\bullet(D'')) \cap [Y].
 \]
By the binomial property of the Bell polynomials,
 the right-hand side is equal to
 \[
 \sum_{i=0}^r \frac{1}{(r-i)!} P_{r-i}(a_\bullet(D')) \frac{1}{i!}P_{i}(a_{\bullet}(D''))\cap [Y].
 \]
 Hence the Bell polynomial shape of the universal polynomials is in
agreement with the multiplicative property of the generating series of $(F/Y,D)$.
 \end{rmk}

\section{An expression for the correction term}\label{sc:corr}
 We now find an expression for the correction term \eqref{eq:TheDiff}.
First, in Section~\ref{sb:schemes} we define some useful schemes.
Then in Lemma~\ref{lm:rec}, we give an expression for $u(D_2,7)$,
obtained via repeated use of the recursion formula of Proposition~\ref{prp611}. Then in Section~\ref{sb:ei}, we introduce classes
$e(W_i)$ on $X_2$ of cycles on $X_4$. Finally, in Proposition~\ref{correction}, we express \eqref{eq:TheDiff} as a linear combination
of the $e(W_i)$.

\begin{sbs}{}{\bf Some important schemes.} \label{sb:schemes}
 Let $\big(F_2^{(1)}/X_2^{(0)}, D_2^{(1)}\big):=
 (F_2/X_2, D_2)$ be the induced pair of $(F/Y,D)$. Define recursively
$\big(F_2^{(j+1)}/X_2^{(j)}, D_2^{(j+1)}\big)$ as the induced pair of
$\big(F_2^{(j)}/X_2^{(j-1)}, D_2^{(j)}\big)$. Let
$\big(F_3^{(j+1)}/X_3^{(j)}, D_3^{(j+1)}\big)$ be the induced pair of
$\big(F_2^{(j)}/X_2^{(j-1)}, D_2^{(j)}\big)$. Let $X_2\big(D_3^{(j+1)}\big) \subset
F_3^{(j+1)}$ be the zero scheme of the section of $\mc
P^1_{\pi_3^{(j+1)}}\big(D_3^{(j+1)}\big)$ induced by that defining the divisor
$D_3^{(j+1)}$.

Let $D^{(j+1)}-E^{(j)}$ denote the restriction of the divisor
$D^{(j+1)}-{\varphi^{(j+1)}}^{-1}E^{(j)}$ to
$F^{(j+1)}|_{X_2(D_3^{(j)})}$. This divisor is effective; so it
induces a section of the restriction of $\mc
P^1_{\pi^{(j+1)}}\big(D^{(j+1)} - {\varphi^{(j+1)}}^{-1}E^{(j)}\big)$. Let
$X_2\big(D^{(j+1)}-E^{(j)}\big)$ denote its scheme of zeros. Let
$D^{(j+2)}-E^{(j)}$ denote the restriction of $D^{(j+2)}-
{\varphi^{(j+2)}}^{-1}{\varphi^{(j+1)}}^{-1}E^{(j)}$ to
$F^{(j+2)}|_{X_2(D^{(j+1)}-E^{(j)})}$, and $X_2\big(D^{(j+2)}-E^{(j)}\big)$ the
scheme of zeros of the induced section of the restriction of $\mc
P^1_{\pi^{(j+2)}}\big(D^{(j+2)}-
{\varphi^{(j+2)}}^{-1}{\varphi^{(j+1)}}^{-1}E^{(j)}\big)$.

Form these five equidimensional schemes of dimension $\dim X_2-7$, or
$\dim X_4$:
 \begin{gather*}
 \mathfrak X_2^{(7)} :=
 \overline {X_2^{(7)} \smallsetminus X_2^{(7)}|_{X_4}},\\
 \mathfrak X_3^{(4)} :=
 \overline {X_3^{(4)}\smallsetminus X_3^{(4)}|_{X_4}},\\
 \mathfrak X_2\big(D_3^{(4)}\big) :=
 \overline {X_2\big(D_3^{(4)}\big)\smallsetminus X_2\big(D_3^{(4)}\big)|_{X_4}},\\
 \mathfrak X_2\big(D^{(4)}-E^{(3)}\big) :=
 \overline {X_2\big(D^{(4)}-E^{(3)}\big)\smallsetminus
 X_2\big(D^{(4)}-E^{(3)}\big)|_{X_4}},\\
 \mathfrak X_2\big(D^{(4)}-E^{(2)}\big) :=
 \overline {X_2\big(D^{(4)}-E^{(2)}\big) \smallsetminus X_2\big(D^{(4)}-E^{(2)}\big)|_{X_4}}.
\end{gather*}
For $j=2, \dots, 7$, consider the composed map $\pi^{(j)}
\circ\dotsb\circ \pi^{(1)}\colon F^{(j)}\to F$, and let $\pi_j\colon
F^{(j)}|_{X_2} \to X_2$ denote its restriction.
 \end{sbs}

 \begin{lem}\label{lm:rec}
 Then
 \begin{align*} u(D_2,7)={}&\frac{1}{7!}
\pi_{7\#}\big[ \mathfrak X_2^{(7)}\big]-\frac{3!}{7!}\pi_{4\#}\big[ \mathfrak
X_3^{(4)}\big]-\frac{4!}{7!}\pi_{4\#} \big[ \mathfrak X_2\big(D_3^{(4)}\big)\big] -\frac{5!}{7!2!}\pi_{4\#} \big[ \mathfrak X_2\big(D^{(4)}-E^{(3)}\big)\big]
\\
&-
\frac{6!}{7!3!}\pi_{4\#}\big[ \mathfrak X_2\big(D^{(4)}-E^{(2)}\big)\big].
 \end{align*}
 \end{lem}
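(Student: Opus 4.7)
\noindent\textit{Proof plan.} The strategy is to apply the recursion formula of Proposition~\ref{prp611} iteratively to $u(D_2,7)$, tracking the five paths through the resulting recursion tree that contribute nonzero terms.

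By Proposition~\ref{lm:4-3}, the pair $(F_2/X_2,D_2)$ is $r_2=7$-generic. However, its $Y(\infty)$ equals $X_4$, which has codimension exactly $7$ in $X_2$, so the hypothesis $Y(\infty)=\varnothing$ of Lemma~\ref{lemZiso1} (used in the proof of Proposition~\ref{prp611}) fails. I would handle this by restricting all constructions to the open subscheme $X_2\smallsetminus X_4$, running the recursion there, and taking closures back in $X_2$. Since $u(D_2,7)$ is, by Definition~\ref{dfnUr}, supported away from $X_2(\infty)=X_4$, this procedure yields the correct class and it accounts for the overlined $\mathfrak X$-schemes appearing in the statement.

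For every $u(\cdot,r)$ with $r\le 7$ arising in the recursion, the $i=4$ term vanishes because $r-8\le-1$ forces $u(\cdot,r-8)=0$. Hence each application has only two branches: a \emph{double} branch ($i=2$, factor $+\frac{1}{r}$, $r\mapsto r-1$) and a \emph{triple} branch ($i=3$, factor $-\frac{1}{r}$, $r\mapsto r-4$). A path in the tree contributes nontrivially precisely when $r$ reaches $0$, yielding a fundamental class $u(\cdot,0)=[\text{base}]$. The surviving paths starting at $r=7$ correspond to solutions $(d,t)$ of $d+4t=7$ in nonnegative integers, namely $(7,0)$ (the all-D path) and $(3,1)$ (four paths, indexed by the position of the single T among the four steps). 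Multiplying the factors $\pm\frac{1}{r}$ along each yields the five coefficients $\frac{1}{7!}$, $-\frac{3!}{7!}$, $-\frac{4!}{7!}$, $-\frac{5!}{7!\,2!}$, $-\frac{6!}{7!\,3!}$ listed in the statement.

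Next I would identify the terminal scheme of each path. The all-D path ends at $u(D_2^{(8)},0)=[X_2^{(7)}]$, a cycle on $F_2^{(7)}\subset F^{(7)}|_{X_2}$. The four mixed paths DDDT, DDTD, DTDD, TDDD end at fundamental classes of subschemes of $F^{(4)}|_{X_2}$, obtained by computing the iterated induced divisor along the path via the formula $D'_i=\varphi'^{-1}D-iE$ of Section~\ref{sbsIndTr}. They are, respectively, $[X_3^{(4)}]$, $[X_2(D_3^{(4)})]$, $[X_2(D^{(4)}-E^{(3)})]$, $[X_2(D^{(4)}-E^{(2)})]$; the shorthand ``$D^{(4)}-E^{(k)}$'' records that the divisor produced at level~$4$ differs from the all-D divisor by an extra $-E^{(k)}$ term, introduced at the step where the T is taken. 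After restriction to $X_2\smallsetminus X_4$ and taking closures, these become the five $\mathfrak X$-schemes of the statement, and the iterated pushforwards factor through $\pi_7$ for the all-D path and through $\pi_4$ for the four mixed paths, yielding the claimed formula.

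The hard part will be the first step: rigorously justifying the restriction to $X_2\smallsetminus X_4$ on the cycle level. One must verify that no spurious class supported entirely on $X_4$ is introduced at any recursion step beyond what the $\mathfrak X$-schemes already encode. This rests on Lemma~\ref{lemZiso2}, which ensures that the natural Hilbert-scheme morphisms at each step of the recursion are open embeddings over the appropriate open subschemes, so the cycle-level recursion holds modulo $X_4$-supported corrections---precisely what is discarded when passing to the overlined schemes.
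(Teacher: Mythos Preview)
Your proposal is correct and follows the same approach as the paper---iterated application of the recursion of Proposition~\ref{prp611}, with Proposition~\ref{lm:4-3} supplying genericity of the successive induced pairs---though you supply considerably more detail than the paper's two-sentence proof. In particular, you make explicit the restriction to $X_2\smallsetminus X_4$ and subsequent closure that the paper encodes only implicitly through its definition of the $\mathfrak X$-schemes.
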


\begin{proof}
 As $(F/Y,D)$ is 8-generic, the successive induced pairs are
$j$-generic (for appropriate~$j$) by Proposition~\ref{lm:4-3}. Thus the
lemma follows from repeated use of the recursion formula of
Proposition~\ref{prp611}.
 \end{proof}

\begin{sbs}{}{\bf The classes $\boldsymbol{e(W_i)}$.} \label{sb:ei}
 Next we find an expression for each term appearing in the formula for
$u(D_2,7)$ in Lemma~\ref{lm:rec}. We just consider $ \mathfrak
X_2^{(7)}$, since the other schemes can be studied in a~similar way and
their classes have similar expressions.

By definition, $X_2^{(j)}$ is the scheme of zeros of the section
$\sigma_2^{(j)}$ of $\mc P^1_{\pi_2^{(j)}}\big(D_2^{(j)}\big)$ induced by the
section $\sigma^{(j)}$ defining $D_2^{(j)}$. For $j=1,\dots,6$, set
 \begin{gather}
 \mathfrak X_2^{(j)}
 := \overline{X_2^{(j)}\smallsetminus X_2^{(j)}|_{X_4}},\qquad
 \mathfrak F_2^{(j+1)} := F_2^{(j+1)}|_{ \mathfrak X_2^{(j)}},\label{ga:F2}
 \\
 W_1^{(j)}:= \big(X_2^{(j)}|_{ \mathfrak X_2^{(j-1)}}\big)|_{X_4} \label{ga:W1j}.
 \end{gather}
 Then $X_2^{(j)}|_{ \mathfrak X_2^{(j-1)}} = \mathfrak X_2^{(j)}\cup
W_1^{(j)}$. Notice the $ \mathfrak F_2^{(j)}$ are equidimensional of
dimension $\dim X_2-j+3$ and that $\cod \big( \mathfrak X_2^{(j)},
 \mathfrak F_2^{(j)}\big) = 3$. Thus $\dim \mathfrak X_2^{(j)}=\dim X_2-j$.

To determine the dimensions of the ``excess schemes'' $W_1^{(j)}$,
consider the fibers of $W_1^{(j)}\to W_1^{(j-1)}\cap \mathfrak
X_2^{(j-1)}$. Starting with $x\in X_4$, we have
$\big(D_2^{(1)}\big)_x=\Gamma_x\cup 2E^{(1)}_x$, where $\Gamma_x$ is the strict
transform of $D_{\pi(x)}$ under the blowup of $F_{\pi(x)}$ at
$x$. Hence, a local calculation yields $\big(W_1^{(1)}\big)_x$, which is
$\big(X_2^{(1)}\big)_x$, is equal to $E^{(1)}_x$ with four embedded points at
the intersections of~$\Gamma_x$ with~$E^{(1)}_x$. Thus $\dim
W_1^{(1)}=\dim X_4+1$.

 Next take $z\in \big(W_1^{(1)}\big)_x$, but $z\notin \Gamma_x$. Then the fiber
$\big(W_1^{(2)}\big)_z$ is the strict transform of $E^{(1)}_x$ plus four
embedded points. If $z\in \Gamma_x\cap E^{(1)}_x$, then $\big(W_1^{(2)}\big)_z$
has an additional point, namely, the intersection of the strict
transform of $\Gamma_x$ with $E^{(2)}_z$. Hence $\dim W_1^{(2)}=\dim
X_4+2$. Continuing, we get
\[ \dim W_1^{(3)}=\dim X_4+3=\dim X_2-4=\dim X_2^{(3)}-1.\]
 Thus $W_1^{(j)}\subset X_2^{(j)}$ and $ \mathfrak
X_2^{(j)}=X_2^{(j)}$ for $j\le 3$.

 For $j=4$ we get $\dim W_1^{(4)}=\dim X_4+4=\dim X_2-3=\dim
X_2^{(4)}+1$. Then $\dim W_1^{(4)}\cap \mathfrak X_2^{(4)}\le \dim
X_2^{(4)}-1$. Hence $\dim W_1^{(5)}\le \dim X_2^{(4)}-1+1=\dim
X_2^{(5)}+1$. Continuing, we get $\dim W_1^{(j)}\le \dim X_2^{(j)}+1$
for $j\ge 4$.

To simplify notation, set $\mc P^{(j)}:=\mc
P^1_{\pi^{(j)}}\big(D^{(j)}\big)$. Consider $\mc P^{(7)}$ restricted to
$ \mathfrak F_2^{(7)}$. The scheme of zeros of its section
$\sigma_2^{(7)}$ is equal to $ \mathfrak X_2^{(7)}\cup W_1^{(7)}$. Blow
up $ \mathfrak F_2^{(7)}$ along $W_1^{(7)}$ and apply the residual
formula for top Chern classes \cite[Example~14.1.4, p.~245]{Fu}. After
pushing down to $ \mathfrak F_2^{(7)}$, we find
 \begin{equation}\label{eq:X7}
 \big[ \mathfrak X_2^{(7)}\big] = c_3\big(\mc P^{(7)}\big)\cap
 \big[ \mathfrak F_2^{(7)}\big] + \big\{c\big(\mc P^{(7)}\big)\cap
 s\big(W_1^{(7)}, \mathfrak F_2^{(7)}\big)\big\}_{\dim X_4}.
 \end{equation}
 Here is why \eqref{eq:X7} holds.

 Let $\sigma'$ denote the induced section of $\mc P^{(7)}$ twisted by
the ideal sheaf of the exceptional divisor on the blowup of $ \mathfrak
F_2^{(7)}$. Let $\mathbb Z(\sigma')$ denote the localized top Chern
class of the pullback of $\mc P^{(7)}$ with respect to~$\sigma'$. The
zero scheme $Z(\sigma')$ is equal to the strict transform of $ \mathfrak
X_2^{(7)}$, hence has codimension~3 in the blowup of $ \mathfrak
F_2^{(7)}$. It follows from \cite[Proposition~14.1(b), p.~244]{Fu} that
$\mathbb Z(\sigma')$ is the class of a positive cycle with support
$Z(\sigma')$. Since the blowup of $ \mathfrak F_2^{(7)}$ need not be
Cohen--Macaulay, we cannot immediately conclude that $\mathbb
Z(\sigma')=[Z(\sigma')]$. However, since $ \mathfrak
F_2^{(7)}\smallsetminus \mathfrak F_2^{(7)}|_{X_4}$ is Cohen--Macaulay,
the restrictions of $\mathbb Z(\sigma')$ and $[Z(\sigma')]$ agree above
$ \mathfrak X_2^{(7)}\smallsetminus \mathfrak X_2^{(7)}|_{X_4}$; hence
they are equal.

 Since $\big[ \mathfrak F^{(7)}_2\big]=\big[F^{(7)}_2|_{ \mathfrak
X^{(6)}_2}\big]=\pi^{(7)*}\big[ \mathfrak X_2^{(6)}\big]$, the pushdown of the first
term under $\pi^{(7)}$ gives $\pi^{(7)}_\# c_3\big(\mc P^{(7)}\big)\cap
\big[ \mathfrak X_2^{(6)}\big]$ by the projection formula. We then replace $
\big[\mathfrak X_2^{(6)}\big]$ by the analogue of equation~(\ref{eq:X7}) and
push down the resulting terms by~$\pi^{(6)}$.

 Continuing this way, we get a formula for $\pi_{7\#}\big[ \mathfrak
X_2^{(7)}\big]$. To simplify notation, set
 \begin{gather*}
 d_j(W_1) := \big\{c\big(\mc P^{(7-j)}\big)\cap
 s\big(W_1^{(7-j)}, \mathfrak F_2^{(7-j)}\big)\big\}_{\dim X_4 +j}
 \qquad\text{for}\quad j=0,\dotsc,3,
 \\
 e(W_1) := \pi^{(1)}_\# \cdots \pi^{(7)}_\#d_0(W_1)
 + \pi^{(1)}_\# \cdots \pi^{(6)}_\#
 \big(\pi^{(7)}_\# c_3\big(\mc P^{(7)}\big)d_1(W_1)\big)\notag
 \\ \hphantom{e(W_1) :=}
 {} + \pi^{(1)}_\# \cdots \pi^{(5)}_\# \big(\pi^{(6)}_\# \big(\pi^{(7)}_\#
 c_3\big(\mc P^{(7)}\big) c_3\big(\mc P^{(6)}\big)d_2(W_1)\big)\big)\notag
 \\ \hphantom{e(W_1) :=}
 {} + \pi^{(1)}_\# \cdots \pi^{(4)}_\#\big(\pi^{(5)}_\#
 \big(\pi^{(6)}_\#\big(\pi^{(7)}_\# c_3\big(\mc P^{(7)}\big) c_3\big(\mc
 P^{(6)}\big)c_3\big(\mc P^{(5)}\big)\big)d_3(W_1)\big)\big).\notag
 \end{gather*}
 Note that the $d_j(W_1)$ are classes in $A^*\big(W_1^{(7-j)}\big)$, and that
restricting the map $\pi_{7-j}$ gives a proper map $W_1^{(7-j)} \to X_4$.

 Then the resulting formula for $\pi_{7\#}[ \mathfrak
X_2^{(7)}]$ is the following:
 \begin{equation*}\label{alpi7}
 \pi_{7\#}\big[ \mathfrak X_2^{(7)}\big] = \pi^{(1)}_\# \big(\pi^{(2)}_\#\big(\dotsb
 \big(\pi^{(7)}_\# c_3\big(\mc P^{(7)}\big)\dotsb\big) c_3\big(\mc
 P^{(2)}\big)\big) c_3\big(\mc P^{(1)}\big)\big) \cap [X_2] + e(W_1).
 \end{equation*}

Similarly, we obtain formulas for the classes $\pi_{4\#}\big[ \mathfrak
X_3^{(4)}\big]$ and $\pi_{4\#}\big[ \mathfrak X_2\big(D_3^{(4)}\big)\big]$ and $\pi_{4\#}\big[
\mathfrak X_2\big(D^{(4)}-E^{(3)}\big)\big]$ and $\pi_{4\#}\big[ \mathfrak
X_2\big(D^{(4)}-E^{(2)}\big)\big]$. For $i=2,\dots,5$, define the classes $e(W_i)$
on $X_2$ correspondingly.
 \end{sbs}

\begin{prp}\label{correction} The correction term~\eqref{eq:TheDiff} is equal to
 \begin{gather*}
\frac{1}{7!}P_7(a_\bullet(D_2))\cap [X_2]\!-u(D_2,7) = \frac{1}{7!}
e(W_1)\!-\frac{3!}{7!}e(W_2)\!-\frac{4!}{7!}e(W_3) \!-\frac{5!}{7!2!}e(W_4)\!-
\frac{6!}{7!3!}e(W_5).
 \end{gather*}
 \end{prp}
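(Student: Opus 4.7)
The plan is to start from Lemma~\ref{lm:rec}, decompose each of the five pushdowns on its right-hand side as an ``iterated Chern class term'' plus an excess supported above $X_4$, and then identify the combined Chern terms with $\frac{1}{7!}P_7(a_\bullet(D_2))\cap[X_2]$, so that the remaining excesses constitute the correction term in the form claimed.

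The decomposition for $\pi_{7\#}[\mathfrak X_2^{(7)}]$ was already carried out in Section~\ref{sb:ei}: iteratively applying the residual formula for top Chern classes \cite[Example~14.1.4]{Fu} through the seven-level tower produced an identity of the form $\pi_{7\#}[\mathfrak X_2^{(7)}] = (\text{iterated Chern pushdown}) + e(W_1)$. The same iterative argument applies verbatim to the other four pushdowns $\pi_{4\#}[\mathfrak X_3^{(4)}]$, $\pi_{4\#}[\mathfrak X_2(D_3^{(4)})]$, $\pi_{4\#}[\mathfrak X_2(D^{(4)}-E^{(3)})]$, $\pi_{4\#}[\mathfrak X_2(D^{(4)}-E^{(2)})]$. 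At each level, the zero scheme of a section of a relative principal parts sheaf splits as the closure of its generic part (the next $\mathfrak X$ in the chain) together with an excess supported above $X_4$, and the residual formula converts this into a Chern class term plus a Segre class contribution. Accumulating the Segre contributions across all levels of each of these four chains yields the classes $e(W_2)$, $e(W_3)$, $e(W_4)$, $e(W_5)$, in exact parallel with the definition of $e(W_1)$.

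Substituting the five decompositions into Lemma~\ref{lm:rec} rewrites $u(D_2,7)$ as a combination of iterated Chern pushdowns plus a linear combination of the $e(W_i)$ with the coefficients $\frac{1}{7!}$, $-\frac{3!}{7!}$, $-\frac{4!}{7!}$, $-\frac{5!}{7!\,2!}$, $-\frac{6!}{7!\,3!}$ inherited from that lemma. The crucial observation is then that the iterated Chern pushdowns are formally identical to what the recursion of Proposition~\ref{prp611}, unfolded as in the $X_4=\varnothing$ case of Theorem~\ref{th:5-1}, would produce if applied to $(F_2/X_2,D_2)$ with $r'=7$. Hence the derivation identity~\eqref{derive}, the binomial property \cite[equation~(4.9), p.~265]{Bell}, and the recursive property \cite[equation~(4.2), p.~263]{Bell} of Bell polynomials assemble these Chern pushdowns into $\frac{1}{7!}P_7(a_\bullet(D_2))\cap[X_2]$, exactly as in the $X_4=\varnothing$ proof; rearranging then yields the asserted expression for the correction term.

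The main obstacle is combinatorial bookkeeping: one must verify that the residual formula, iterated through the four chains other than that of $\mathfrak X_2^{(7)}$, produces excess contributions of precisely the shape packaged as $e(W_2),\ldots,e(W_5)$, and that the signs and coefficients emerging from the residual formula combine correctly with the Bell polynomial recursion so that the signs on the right-hand side of the proposition come out as stated. Beyond these checks, no new conceptual ingredient is needed past the residual Chern class formula and the Bell polynomial identities already used in the $X_4=\varnothing$ proof.
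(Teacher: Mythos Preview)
Your proposal is correct and follows essentially the same approach as the paper's proof. The paper compresses the identification of the iterated Chern pushdowns with $\frac{1}{7!}P_7(a_\bullet(D_2))\cap[X_2]$ into a single appeal to Algorithm~2.3 of \cite{K--P} (observing that in the absence of~$X_4$ the algorithm would output exactly that Bell polynomial expression), whereas you spell out the Bell polynomial manipulations via~\eqref{derive} and the binomial and recursive identities; but the structure---decompose each pushdown from Lemma~\ref{lm:rec} via the residual formula into a Chern piece plus an $e(W_i)$, recognize the Chern pieces as assembling into the Bell polynomial, and subtract---is the same.
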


\begin{proof}
 Recall that the classes $a_s(D_2)$ on $X_2$ are obtained
by pushing down the classes $b_s(D_2)$ on $F_2$ obtained by applying
Algorithm~2.3 of \cite[p.~72]{K--P} to the pair $(F_2/X_2, D_2)$. In
the case that $X_4=\varnothing$, the Algorithm would have produced the
formula $u(D_2,7)=\frac{1}{7!}P_7(a_\bullet(D_2))\cap [X_2]$.
Removing the classes $e(W_i)$, we get
 \begin{align*}
\frac{1}{7!}P_7(a_\bullet(D_2))\cap [X_2]={}&
\frac{1}{7!} \bigl(\pi_{7\#}\big[ \mathfrak
X_2^{(7)}\big]-e(W_1)\bigr)-\frac{3!}{7!}\bigl(\pi_{4\#}\big[ \mathfrak
X_3^{(4)}\big]- e(W_2)\bigr)
\\
&-\frac{4!}{7!}\bigl(\pi_{4\#}\big[ \mathfrak X_2\big(D_3^{(4)}\big)\big] - e(W_3)\bigr)\\
&
 - \frac{5!}{7!2!}\bigl(\pi_{4\#}\big[ \mathfrak
X_2\big(D^{(4)} - E^{(3)}\big)\big] - e(W_4)\bigr)
\\
& -\frac{6!}{7!3!}\bigl(\pi_{4\#}\big[ \mathfrak
X_2\big(D^{(4)}-E^{(2)}\big)\big]-e(W_5)\bigr).
 \end{align*}
 Lemma~\ref{lm:rec} now yields the asserted formula.
 \end{proof}

 \section{Independence of the correction term}\label{sc:indep}
 In this section, we prove Theorem~\ref{constant}, which asserts that the
correction term \eqref{eq:TheDiff} is equal to~$C[X_4]$, where $C$ is
independent of the strongly 8-generic pair $(F/Y,D)$ with
$Y(\infty)=\varnothing$. We~work locally analytically on $F$ at a general
closed point $x$ in $X_4$. Section~\ref{sbSetup} describes the local
setup. Lemma~\ref{lm:proper} asserts that locally we have the
properness we need to pushdown classes. Lemma~\ref{local} asserts that
the key classes $e(W_i)$ pull back to their local counterparts $e\big(\wh
W_i\big)$.

Lemma~\ref{lm:versal} asserts that the coefficient in $e\big(\wh W_i\big)$ of
$\big[\wh X_4\big]$ depends only on the analytic type of the ordinary quadruple
point $x\in D_{\pi(x)}$; namely, on the cross ratio of the four tangents
at~$x$. Its~proof requires $(F/Y,D)$ to be strongly 8-generic.
Finally, we prove Theorem~\ref{constant} by exhibiting a pair $(F/Y,D)$,
where~$X_4$ is irreducible and where any given value of the cross ratio
appears at some $x\in X_4$.

\begin{sbs}{}{\bf The local setup.} \label{sbSetup}
 Fix an 8-generic pair $(F/Y,D)$ with $Y(\infty)=\varnothing$, and a
general closed point $x\in X_4$. By~{\it general\/}, we mean that $x$
lies on a single irreducible component $Z$ of $X_4$ and that $x$ is an
ordinary quadruple point of $D_{\pi(x)}$. Let us arrange for every $x
\in X_4$ to be general as follows. First, if two components $Z'$ and
$Z''$ of $X_4$ meet, then $\dim (Z' \cap Z'') < \dim X_4$. So~$\cod
\pi(Z'\cap Z'')> \cod \pi (X_4)$. But $\cod(\pi(X_4),Y) = 8$ by
\eqref{eq43d} as $(F/Y,D)$ is 8-generic. Hence we may discard $Z' \cap
Z''$. Second we may discard the locus of $y \in Y$, where $D_y$ has a~singularity~$x$ worse than an ordinary quadruple point, again because
$(F/Y,D)$ is 8-generic.

Set $\wt F:= \Spec \mc O_{F,x}$ and $\wt Y:= \Spec \mc O_{Y,\pi(x)}$ and
$\wt D:= \Spec \mc O_{D,x}$. Denote the induced pair of $\big(\wt F/\wt
Y, \wt D\big)$ by $\big(\wt F_2/\wt X_2, \wt D_2\big)$. The bundles of relative
principal parts are compatible not only with the base change $\wt Y\to
Y$, but also with the maps $\wt F\to F$ and $\wt F^{(j)} \to F^{(j)}$;
cf.\ \cite[Proposition~16.4.14, p.~22]{EGAIV4}. So although the $\wt X_i$
for $i\ge 2$ are defined in terms of $\big(\wt F/\wt Y, \wt D\big)$, we have
$\wt X_i = \Spec \wt{\mc O}_{X_i,x}$. Similarly, the schemes
constructed in Section~\ref{sc:corr} for $(F_2/X_2, D_2)$ induce the
corresponding schemes for $\big(\wt F_2/\wt X_2, \wt D_2\big)$. Denote by
$\wt W^{(j)}_i$ the scheme corresponding to $W^{(j)}_i$; see~\eqref{ga:W1j}.

Next consider the completions of the local rings, giving us a pair $\big(\wh
F/\wh Y, \wh D\big)$. Replacing the principal parts bundles by their
completions, cf.~\cite[Example~16.14, p.~416]{Eisenbud}, construct the
$\wh X_i$, the induced pair $\big(\wh F_2/\wh X_2,\wh D_2\big)$, and the
corresponding schemes of Section~\ref{sc:corr}. Since the complete
principal parts bundles are pullbacks, $\wh X_i = \Spec \wh{\mc
O}_{X_i,x}$, and all the schemes of Section~\ref{sc:corr} for
$(F_2/X_2,D_2)$ pull back to the corresponding schemes for $\big(\wh
F_2/\wh X_2, \wh D_2\big)$. Denote by $\wh W^{(j)}_i$ the scheme
corresponding to $\wt W^{(j)}_i$, so to $W^{(j)}_i$.

The classes $e(W_i)$ of Section~\ref{sb:ei} are sums of pushdowns of
classes on the $W^{(j)}_i$. By~Lemma~\ref{lm:proper} below, the $\wh
W^{(j)}_i$ are proper over $\wh X_2$; hence, we may form the
corresponding classes $e(\wh W_i)$ for the pair $\big(\wh F_2/\wh X_2, \wh
D_2\big)$. Denote them by $e(\wh W_i)$.

Let $\epsilon\colon \wh X_2 \to X_2$ denote the composition of the flat maps
$\wh X_2\to \wt X_2$ and $\wt X_2 \to X_2$. Then $\big[\wh X_4\big] =
\epsilon^*[X_4]$, and Lemma~\ref{local} asserts $e(\wh
W_i)=\epsilon^*e(W_i)$.

Each $e(W_i)$ is the class of a cycle $U_i$ on $X_4$ of dimension $\dim
X_4$. Say that the component~$Z$ of~$X_4$ containing $x$ appears in
$U_i$ with coefficient $C'_i$ and in the fundamental cycle $|X_4|$ with
coefficient $C''_i$. Set $C_i := C_i'/C_i''$. Then the cycles $U_i$
and $C_i|X_4|$ become equal after restriction to a neighborhood of $Z$,
so the classes $e(W_i)$ and $C_i[X_4]$ do too. Thus
 \begin{equation}\label{eqwhWiX4}
 e\big(\wh W_i\big) = C_i\big[\wh X_4\big];
 \end{equation}
 furthermore, $C_i$ is independent of the choice of $x$ in~$Z$.
 \end{sbs}

\begin{lem}\label{lm:proper}
 The schemes $\wh W^{(j)}_i$ are proper over $\wh X_2$.
 \end{lem}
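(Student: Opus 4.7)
The plan is to reduce properness of $\wh W^{(j)}_i \to \wh X_2$ to projectivity over $\wh X_4$, combined with the fact that $\wh X_4 \into \wh X_2$ is a closed immersion and hence proper. Every scheme in sight should factor through $\wh X_4$ by construction.

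First I would unwind the definitions in Section~\ref{sb:schemes} and Section~\ref{sb:ei}. The scheme $W_1^{(j)}$ of \eqref{ga:W1j} is, by definition, a closed subscheme of $X_2^{(j)}|_{X_4} \subset F_2^{(j)}|_{X_4}$; the analogous schemes $W_i^{(j)}$ for $i=2,\dots,5$ sit inside appropriate restrictions of iterated $F^{(j)}$'s to~$X_4$. The iterated maps $\pi^{(j)}\colon F^{(j)}\to F^{(j-1)}$ are smooth and projective, being families of blowups of surfaces (cf.\ Section~\ref{sbsIndTr}); their composition is therefore projective. Restricting to~$X_4$, the map $F_2^{(j)}|_{X_4}\to X_4$ is projective, hence so is the closed subscheme inclusion $W_i^{(j)} \into F_2^{(j)}|_{X_4}$ composed with it, giving a projective (hence proper) morphism $W_i^{(j)}\to X_4$.

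Second I would transfer this to the formal setting described in Section~\ref{sbSetup}. The compatibility of principal parts bundles with the base changes $\wh Y\to Y$ and with the maps $\wh F^{(j)}\to F^{(j)}$, cited via \cite[Proposition~16.4.14, p.~22]{EGAIV4} and \cite[Example~16.14, p.~416]{Eisenbud}, implies that each $\wh W^{(j)}_i$ is obtained from $W^{(j)}_i$ by base change along the flat map $\wh X_4 \to X_4$. Projectivity is preserved under arbitrary base change, so $\wh W^{(j)}_i \to \wh X_4$ is projective, and in particular proper. Composing with the closed immersion $\wh X_4 \into \wh X_2$ yields properness of $\wh W^{(j)}_i \to \wh X_2$, which is the claim.

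The only point that demands real care, and which I would regard as the main obstacle, is verifying that the formal constructions of the $\wh W^{(j)}_i$ really are pulled back from their global counterparts via $\wh X_4 \to X_4$. This is essentially packaged into the setup of Section~\ref{sbSetup}, where the principal parts bundles and induced pairs are shown to behave well under completion; once that package is in hand, the proof is purely formal.
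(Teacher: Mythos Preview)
Your overall plan is the right one and matches the paper's strategy: show that $W^{(j)}_i \to X_4$ is projective (as a closed subscheme of the projective $F_2^{(j)}|_{X_4}\to X_4$), then base-change to $\wh X_4$ and compose with the closed immersion $\wh X_4\hookrightarrow \wh X_2$. The gap is in your second step, where you assert that $\wh W^{(j)}_i$ is the base change of $W^{(j)}_i$ along $\wh X_4\to X_4$ and claim this is ``packaged into'' Section~\ref{sbSetup}. It is not. The pair $(\wh F/\wh Y,\wh D)$ is \emph{not} a base change of $(F/Y,D)$ along $\wh Y\to Y$: here $\wh F=\Spec \wh{\mc O}_{F,x}$ is only a formal germ of $F$, so the iterated blowups $\wh F_2^{(j)}$ are strictly smaller than $F_2^{(j)}\times_{X_2}\wh X_2$. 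Concretely, the fiber of $\wh F_2\to \wh X_2$ over $x$ is the blowup of $\Spec k[[u,v]]$ at the origin, not the blowup of the entire surface $F_{\pi(x)}$. So the compatibility of principal-parts bundles gives you $\wh X_i=\Spec\wh{\mc O}_{X_i,x}$, but it does \emph{not} by itself give $\wh W^{(j)}_i = W^{(j)}_i\times_{X_2}\wh X_2$.

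What makes that identification work is the observation the paper's proof supplies: the $W^{(j)}_i$ are supported in the union $E_j$ of strict transforms of exceptional divisors, and those exceptional loci lie entirely inside the local pieces $\wt F_2^{(j)}\subset F_2^{(j)}|_{\wt X_2}$ (and likewise for~$\wh{}$). Once you know the support sits in the exceptional locus, a local-ring comparison yields $\wt W^{(j)}_i = W^{(j)}_i|_{\wt X_2}$ as schemes, and properness then follows by base change exactly as you outline. So your proposal has correctly located the obstacle, but the sentence in Section~\ref{sbSetup} you lean on is a forward reference, not a proof; supplying the exceptional-support argument is the actual content of the lemma.
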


\begin{proof}
 Let us first show that the schemes $\wt W^{(j)}_i$ and $W^{(j)}_i|_{
\wt X_2}$ have the same support. It suffices to consider only the
$W^{(j)}_1$ as the other cases are similar.

Let $E_{j}\subset F^{(j)}$ be the union of the strict transforms
of the exceptional divisors $E^{(1)},\dots,E^{(j)}$; see
\cite[Definition~3.5, p.~423]{KP09}. It follows from the description in
Section~\ref{sc:corr} that $W^{(j)}_1$ is supported in $E_j$. The fibers
of the exceptional divisor of $\wt F_2$ are the same as the
corresponding fibers of the exceptional divisor of $F_2$. Hence, above
$x \in \wt X_2$, the fiber of $E^{(j)}$ lies in $\wt F_2^{(j)}$. Thus
$W^{(j)}_1|_{ \wt X_2}$ has support in $\wt F^{(j)}_2$, and it is the
same as the support of $\wt W^{(j)}_i$; moreover, this support is proper
over $\wt X_2$.

For $w\in W^{(j)}_i|_{ \wt X_2}$, the map $\mc O_{F^{(j)}_2,w}\onto \mc
O_{W^{(j)},w}$ pulls back to $\mc O_{\wt F^{(j)}_2,w}\onto \mc O_{\wt
W^{(j)},w}$. But clearly $\mc O_{F^{(j)}_2,w}=\mc O_{\wt F^{(j)}_2,w}$;
hence $\mc O_{W^{(j)}_i,w}= \mc O_{\wt W^{(j)_1},w}$. Hence $\wt
W^{(j)}_i=W^{(j)}_i|_{ \wt X_2}$. Therefore, $\wt W^{(j)}_i$ is proper
over $\wt X_2$. Thus the pullback $\wh W^{(j)}_i$ is proper over $\wh
X_2$.
 \end{proof}

\begin{lem}\label{local}
 We have $e\big(\wh W_i\big)=\epsilon^*e(W_i)$.
 \end{lem}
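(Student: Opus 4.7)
The plan is to exploit the flatness of $\epsilon\colon \wh X_2\to X_2$ together with the base-change compatibility recorded in Section~\ref{sbSetup}. Since, by the construction of Section~\ref{sb:ei}, $e(W_i)$ is a sum of iterated pushdowns of products of Chern classes of the sheaves $\mc P^{(j)}$ with Segre-class contributions of the form $\big\{c\big(\mc P^{(7-j)}\big)\cap s\big(W^{(7-j)}_i,\mathfrak F_2^{(7-j)}\big)\big\}$, and $e\big(\wh W_i\big)$ is built from the same recipe applied to the hatted data, it suffices to check that every ingredient of this recipe is compatible with $\epsilon^*$.

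First I would verify scheme-theoretic compatibility. The local setup already gives $\wh X_i = \Spec \wh{\mc O}_{X_i,x}$ from the base-change compatibility of the bundles of relative principal parts. Iterating this, and using that blowups commute with flat base change, each of $\wh F^{(j)}$, $\wh X_2^{(j)}$, $\wh{\mathfrak F}_2^{(j)}$, $\wh{\mathfrak X}_2^{(j)}$, and $\wh W^{(j)}_i$ is the $\epsilon$-pullback of its unhatted counterpart; for the closure-based schemes $\mathfrak X_2^{(j)}$ one uses that flat pullback respects the decomposition into the open part (before closing up) and the discarded closed part over $X_4$. In particular $\wh{\mc P}^{(j)} = \epsilon^*\mc P^{(j)}$, whence $c_k\big(\wh{\mc P}^{(j)}\big) = \epsilon^* c_k\big(\mc P^{(j)}\big)$, and $s\big(\wh W^{(j)}_i,\wh{\mathfrak F}^{(j)}_2\big) = \epsilon^* s\big(W^{(j)}_i,\mathfrak F^{(j)}_2\big)$ by compatibility of Segre classes with flat base change.

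Next I would invoke the bivariant formalism: Chern classes commute with all bivariant classes (as cited in the proof of Proposition~\ref{prCom}), and, because $\wh W^{(j)}_i$ is proper over $\wh X_2$ by Lemma~\ref{lm:proper}, each proper pushdown $\pi^{(j)}_\#$ appearing in $e(W_i)$ commutes with $\epsilon^*$ via the compatibility~\eqref{A23}. Assembling these commutations term by term yields $e\big(\wh W_i\big)=\epsilon^* e(W_i)$. The main obstacle is the bookkeeping for the residual intersection formula~\eqref{eq:X7}, in which $\big[\mathfrak X_2^{(j)}\big]$ is decomposed using a blowup of $\mathfrak F_2^{(j)}$ along $W^{(j)}_1$ and \cite[Example~14.1.4]{Fu}: one must confirm that this entire residual decomposition—in particular the replacement of the localized top Chern class by $[Z(\sigma')]$ away from $X_4$—pulls back cleanly under $\epsilon^*$, which follows from the compatibility of blowups, localized top Chern classes, and the Cohen--Macaulay locus with flat base change.
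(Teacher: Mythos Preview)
Your approach is correct and matches the paper's: reduce to checking each summand, use flat base change for Segre classes (Fulton, Proposition~4.2(b)) and for Chern classes of the $\mc P^{(j)}$, and then repeatedly apply the compatibility $\wh\pi^{(j)}_\#\epsilon^{(j)*}_2 = \epsilon^{(j-1)*}_2\pi^{(j)}_\#$ from~\eqref{A23}. Your final paragraph is superfluous, however: by Section~\ref{sb:ei} the classes $e(W_i)$ are already \emph{defined} as explicit sums of iterated pushdowns of $c\big(\mc P^{(j)}\big)\cap s\big(W_i^{(j)},\mathfrak F_2^{(j)}\big)$, so the residual decomposition~\eqref{eq:X7} plays no role in the present lemma and need not be shown to commute with~$\epsilon^*$.
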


\begin{proof}
 It suffices to check that each summand of $e(W_i)$
pulls back to the corresponding summand of $e\big(\wh W_i\big)$. Here we
only consider the first summand of $e(W_1)$, since the other cases
are similar.

In \eqref{ga:F2}, we defined the schemes $\overline F_2^{(j)}$. Denote
by $\epsilon_2^{(j)}\colon \overline{\wh F}_2^{(j)}\to \overline F_2^{(j)}$
the map induced by the map $\epsilon$ defined in Section~\ref{sbSetup}. Notice that, as $\epsilon_2^{(7)}$ is flat,
\[
s\big(\wh W_1^{(7)},\overline {\wh
 F}_2^{(7)}\big)=s\big(\epsilon_2^{(7)-1}W_1^{(7)},\epsilon_2^{(7)-1}\overline
 F_2^{(7)}\big)=\epsilon_2^{(7)*} s\big(W_1^{(7)},\overline {F}_2^{(7)}\big);
 \]
 cf.\ \cite[Proposition~4.2(b), p.~74]{Fu}. But $\wh {\pi}_\#^{(j)}
\epsilon_2^{(j)*} = \epsilon_2^{(j-1)*} \pi_\#^{(j)}$ by~\eqref{A23}.
Thus
 \begin{align*}
 e\big(\wh W_i\big) &=\wh\pi^{(1)}_\# \dotsb \wh \pi^{(7)}_\#
 \big\{c\big(\wh {\mc P}^{(7)}\big)\cap s\big(\wh W_1^{(7)},\overline {\wh
 F}_2^{(7)}\big)\big\}_{\dim X_4}\\
 &=\wh\pi^{(1)}_\# \cdots \wh \pi^{(7)}_\# \big\{\epsilon_2^{(7)*} c\big(
 \mc P^{(7)}\big)\cap \epsilon_2^{(7)*} s\big(W_1^{(7)},\overline
 {F}_2^{(7)}\big)\big\}_{\dim X_4}\\
 &=\wh\pi^{(1)}_\# \cdots \wh \pi^{(6)}_\#
 \epsilon_2^{(6)*}\pi_\#^{(7)} \big\{c\big( \mc P^{(7)}\big)\cap
 s\big(W_1^{(7)},\overline {F}_2^{(7)}\big)\big\}_{\dim X_4}\\
 &=\dotsb =\epsilon^*\pi^{(1)}_\# \cdots \pi^{(7)}_\# \big\{c\big( \mc
 P^{(7)}\big)\cap s\big( W_1^{(7)},\overline F_2^{(7)}\big)\big\}_{\dim X_4} \\
 &= \epsilon^*e(W_i),
\end{align*}
 as desired.
 \end{proof}

 \begin{lem}\label{lm:versal}
 Assume $(F/Y,D)$ is strongly $8$-generic. Then $C_i$ depends just on
the analytic type of $D_{\pi(x)}$ at $x$, but is otherwise inpenendent
of the choice of $(F/Y,D)$.
 \end{lem}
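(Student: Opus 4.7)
The strategy is to pull back the relevant schemes and classes from a fixed miniversal deformation of the ordinary quadruple point along a flat classifying map; strong $8$-genericity is exactly what forces that classifying map to be flat, and the lemma then follows by flat base change. Concretely, take as model $\mc B^\circ := \Spec k[[\lambda, t_1, \dots, t_8]]$, $\mc F^\circ := \mc B^\circ \x \Spec k[[u,v]]$, and let $\mc D^\circ \subset \mc F^\circ$ be defined by
\[
 uv(u-v)(u-\lambda v) + \sum_{\alpha=1}^{8} t_\alpha\, m_\alpha(u,v),
\]
where $m_1,\dots,m_8$ are monomials of degree $\le 3$ that, for each $\lambda$, form a basis of a complement in $k[[u,v]]/(u,v)^4$ to the equisingular tangent space of $uv(u-v)(u-\lambda v)$. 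Then $(\mc F^\circ/\mc B^\circ, \mc D^\circ)$ is a miniversal deformation of the ordinary quadruple point; its quadruple-point locus $\mc X_4^\circ = \{t_\alpha = 0,\ u = v = 0\} \subset \mc F^\circ$ projects to a curve $\mc B^\circ_4 \cong \Spec k[[\lambda]] \subset \mc B^\circ$ of codimension $8$, matching $\cod(X_4,Y)$. Build the analogues of all the schemes $X_i^{(j)}$, $W_i^{(j)}$ and classes $e(W_i)$ of Sections~\ref{sb:schemes}--\ref{sb:ei} for this model, obtaining $\mc X_i^{\circ(j)}$, $\mc W_i^{\circ(j)}$, and $e(\mc W_i^\circ)$.

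By miniversality, there is a classifying map $\vf\colon \wh Y \to \mc B^\circ$ with $(\wh F, \wh D) \cong \vf^*(\mc F^\circ, \mc D^\circ)$; it sends $\pi(\wh X_4) \subset \wh Y$ into $\mc B^\circ_4$. The key claim is that $\vf$ is flat. Since $\wh Y$ is Cohen--Macaulay and $\mc B^\circ$ is regular, by miracle flatness it suffices to check that every closed fiber of $\vf$ has the expected dimension $\dim Y - 9$. Over $\mc B^\circ \smallsetminus \mc B^\circ_4$, this is automatic. A fiber of $\vf$ over a point $b \in \mc B^\circ_4$ lies set-theoretically inside $\pi(\wh X_4)$ (the locus where $\wh D$ acquires an ordinary quadruple point), which has dimension $\dim X_4 = \dim Y - 8$; hence that fiber has dimension $\dim Y - 9 = \dim X_4 - 1$ if and only if $\vf|_{\pi(\wh X_4)}\colon \pi(\wh X_4) \to \mc B^\circ_4 \cong \Spec k[[\lambda]]$ is dominant, equivalently $\lambda \circ \vf|_{\pi(\wh X_4)}$ is nonconstant. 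This is precisely the content of strong $8$-genericity: since $x$ lies on a single component $Z$ of $X_4$ and the cross ratio is nonconstant on $Z$, the function $\lambda \circ \vf|_{\pi(\wh X_4)}$ is nonconstant. Hence $\vf$ is flat.

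Once $\vf$ is flat, the construction of each scheme $X_i^{(j)}$, $W_i^{(j)}$ and each class $e(W_i)$ commutes with pullback: the principal-parts sheaves commute with base change, and the residual/blowup identities behind $e(W_i)$ pull back under the flat map, exactly as in the proof of Lemma~\ref{local} iterated. So $e(\wh W_i) = \vf^* e(\mc W_i^\circ)$ and $[\wh X_4] = \vf^*[\mc X_4^\circ]$. Matching this against $e(\wh W_i) = C_i [\wh X_4]$ shows $C_i$ equals the coefficient of $[\mc X_4^\circ]$ in $e(\mc W_i^\circ)$ at the point $\vf(x) \in \mc X_4^\circ$. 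That coefficient depends only on $\vf(x)$, which is determined by $\lambda(\vf(x))$, i.e., by the cross ratio of the four tangents of $D_{\pi(x)}$ at~$x$. Since the cross ratio modulo the $S_4$-action that permutes the four branches is the complete analytic invariant of an ordinary quadruple point, $C_i$ depends only on the analytic type of $D_{\pi(x)}$ at~$x$. The main obstacle is the flatness of $\vf$: without strong $8$-genericity, $\lambda \circ \vf|_{\pi(\wh X_4)}$ could be constant, the fiber of $\vf$ over the corresponding point of $\mc B^\circ_4$ would have excess dimension $\dim X_4$ instead of $\dim X_4 - 1$, and miracle flatness would fail.
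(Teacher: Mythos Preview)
Your argument is correct and follows essentially the same route as the paper's proof: construct a (mini)versal model for the ordinary quadruple point, obtain a classifying map from $\wh Y$, use strong $8$-genericity together with miracle flatness (Cohen--Macaulay source, regular $9$-dimensional target, closed fiber of codimension~$9$) to see that the classifying map is flat, and then pull back the classes $e(W_i)$ and $[X_4]$ as in Lemma~\ref{local}. The only cosmetic difference is that the paper takes the standard $9$-parameter versal deformation of the specific germ $f$ and notes that this deformation depends only on the analytic type, whereas you build a single model with the modular parameter $\lambda$ among the base coordinates and read off the cross ratio directly as $\lambda(\vf(x))$; these two presentations are equivalent.
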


\begin{proof}
 Recall that $x$ is an ordinary quadruple point of $\wh D_{\wh \pi(x)}$.
Let $(\bb V/\bb B, \bb D)$ be its versal deformation; see
\cite[Example~14.0.1, p.~101 and Theorem~14.1, p.~103]{hartshorne}.
Recall how $(\bb V/\bb B, \bb D)$ is constructed. Take variables
$t_1,\dots,t_9, u, v$. Identify $\wh F_{\pi(x)}$ with $\Spec k[[u,v]]$.
Say $\wh D_{\pi(x)}$ is defined by $f(u,v)$ in $k[[u,v]]$, and choose
$g_1,\dots,g_9$ in $k[u,v]$ whose classes in $k[[u,v]]/(f,f_u,f_v)$ form
a basis of that vector space. Then
\[
\bb B:= \Spec k[t_1,\dots,t_9] \qquad \text{and}\qquad
 \bb D:= \Spec B[[u,v]]\big/\Bigl(f+\sum t_ig_i\Bigr).
 \]
 Note that
$(\bb V/\bb B, \bb D)$ depends just on the analytic type of
$D_{\pi(x)}$ at $x$.

 Since $x\in \wh D_{\pi(x)}$ is an ordinary quadruple point,
$f=f_4+f_5+\cdots$, where $f_4$ is a product of independent linear forms.
Choose the $g_i$ so that only $g_9\in (u,v)^4$. Define $\bb B_4$
by the vanishing of $t_1,\dotsc,t_8$. Then $b\in \bb B$ lies in $\bb
B_4$ iff the fiber $\bb D_b$ has a quadruple point.

Recall from \cite[Theorem~14.1, p.~103]{hartshorne} that there exists a
map $\delta \colon \wh Y\to \bb B$ such that $\wh D$ and $\bb D \times_{\bb
B} \wh Y$ become isomorphic after completion along their fibers over
$\wh\pi(x)$. Since $\wh D$ is complete at~$x$, it is already complete
along its fiber. Form the completions $\wh{\bb V}$, $\wh{\bb B}$, $\wh{\bb
D}$, $\wh{\bb B}_4$ at the origin~$b_0$ of~$\bb B$. Then $\delta \colon \wh
Y\to \bb B$ factors through a map $\wh\delta \colon \wh Y\to \wh{\bb B}$,
and $\wh D$ is isomorphic to the completion of $\wh{\bb D}
\times_{\wh{\bb B}} \wh Y$ along its fiber over $\wh\pi(x)$. Each
subscheme $\wh X_i$ of $\wh D$ is the pullback of the corresponding
subscheme of $\wh{\bb D} \times_{\wh{\bb B}} \wh Y$, which, in turn, is
the pullback of the corresponding subscheme $\wh{\bb X}_i$ of $\wh{\bb
D}$.

Let us show $\wh\delta \colon \wh Y\to \wh{\bb B}$ is flat. Notice
$\wh\delta\big(\wh\pi\big(\wh X_4\big)\big) \subset \wh{\bb B}_4$. But
$\wh\delta\big(\wh\pi\big(\wh X_4\big)\big) \neq \{b_0\}$ because $(F/Y,D)$ is strongly
8-generic. However, $\dim \wh{\bb B}_4 = 1$. It follows that
$\cod\big(\delta^{-1}(b_0), \wh\pi\big(\wh X_4\big)\big) =1$. Now, $\cod\big(\wh\pi(\wh
X_4), \wh Y\big) = 8$ by \eqref{eq43d} since $(F/Y,D)$ is 8-generic. So
$\cod\big(\wh\delta^{-1}(b_0), \wh Y\big) =9$. But $\wh Y$ is
Cohen--Macaulay, and $\wh{\bb B}$ is smooth. Thus $\wh \delta$ is flat
by \cite[Proposition~15.4.2, p.~230]{EGAIV3}.

Form the class $e\big(\wh{\bb W}_i\big)$ for $\big(\wh{\bb V}/\wh{\bb B}, \wh{\bb
D}\big)$ analogous to the class $e\big(\wh W_i\big) $ for $\big(\wh F/\wh Y, \wh
D\big)$. The map $\wh{\bb X}_2 \times_{\wh{\bb B}} \wh Y \to \wh{\bb X}_2$
is flat, as it is induced by $\wh\delta$. Since the map $\wh X_2 \to \bb
X_2 \times_{\bb B} \wh Y$ is also flat, we can argue as in the proof of
Lemma~\ref{local} to conclude that $e\big(\wh{\bb W}_i\big)$ pulls back to~$e\big(\wh W_i\big)$. Owing to the same flatness, the fundamental class
$\big[\wh{\bb X}_4\big]$ on $\wh{\bb X}_2$ pulls back to the fundamental class~$\big[\wh X_4\big]$ on~$\wh X_2$.

Form the equation $e\big(\wh{\bb W}_i\big) = \bb C_i\big[\wh{\bb X}_4\big]$ on $\wh{\bb
X}_2$ analogous to $e\big(\wh W_i\big) = C_i\big[\wh X_4\big]$ on $\wh X_2$, see~\eqref{eqwhWiX4}. The~former equation pulls back to the latter owing to
the preceding paragraph. Thus $C_i = \bb C_i$. But~$\bb C_i$ depends
just on $(\bb V/\bb B,\bb D)$, so just on the analytic type of
$D_{\pi(x)}$ at $x$. Thus $C_i$ depends just on the analytic type of
$D_{\pi(x)}$ at $x$.
 \end{proof}

\begin{thm}\label{constant}
 Assume $(F/Y,D)$ is strongly $8$-generic. Then the correction term
{\rm\eqref{eq:TheDiff}} is equal to $C[X_4]$, where $C$ is independent of
the choice of $(F/Y,D)$.
 \end{thm}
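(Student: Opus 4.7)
The plan is to combine Proposition~\ref{correction} with the local-analytic setup of Section~\ref{sbSetup} and Lemma~\ref{lm:versal} to show that, for each $i=1,\dots,5$, we have $e(W_i)=C_i[X_4]$ for a single integer $C_i$ that does not depend on the strongly $8$-generic pair $(F/Y,D)$. Once this is established, Proposition~\ref{correction} immediately gives
\[
C \;=\; \tfrac{1}{7!}C_1 \;-\; \tfrac{3!}{7!}C_2 \;-\; \tfrac{4!}{7!}C_3 \;-\; \tfrac{5!}{7!2!}C_4 \;-\; \tfrac{6!}{7!3!}C_5,
\]
which is then the required universal integer.

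First I would observe that each class $e(W_i)$ has dimension $\dim X_4$ and is supported on $X_4$: the schemes $W_i^{(j)}$ sit above $X_4$ by construction in Section~\ref{sb:ei}, and $8$-genericity of $(F/Y,D)$ forces $X_4$ to be equidimensional of codimension $8$ in $F$ by Corollary~\ref{lm66}. Hence $e(W_i)=\sum_Z C_i(Z)[Z]$ with one integer coefficient per irreducible component $Z$ of $X_4$.

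Next, at a general closed point $x\in Z$, Lemma~\ref{local} together with~\eqref{eqwhWiX4} yields $e(\wh W_i)=C_i(Z)[\wh X_4]$ in the completion, and Lemma~\ref{lm:versal} identifies $C_i(Z)$ with a value $C_i(\lambda(x))$ depending only on the cross ratio $\lambda(x)$ of the four tangents to $D_{\pi(x)}$ at $x$. Because Lemma~\ref{lm:versal} is proved by comparing to the versal deformation $(\bb V/\bb B,\bb D)$ of an ordinary quadruple point, the function $\lambda\mapsto C_i(\lambda)$ is one and the same function for every strongly $8$-generic pair. The remaining question is therefore whether this function is constant in $\lambda$.

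To finish I would exhibit one strongly $8$-generic witness $(F_0/Y_0,D_0)$ in which $X_4$ is irreducible and the cross-ratio map $X_4\to\{\lambda\}$ is dominant; a natural candidate is a suitably parameterized family of plane curves (say quartics or quintics) carrying an ordinary quadruple point whose four tangent lines can be prescribed freely. For this witness the single integer $C_i(X_4)$ equals $C_i(\lambda)$ on a Zariski-dense set of $\lambda$, so $C_i(\lambda)$ is in fact a constant $C_i$. Applied back in the general case this yields $C_i(Z)=C_i$ for every component $Z$ of $X_4$ in every strongly $8$-generic pair, completing the argument. The main obstacle I anticipate is precisely the exhibition of this witness: one must check that the candidate family is genuinely strongly $8$-generic, that its $X_4$ is irreducible, and that the tangent-cross-ratio is nonconstant on it; without such a witness Lemma~\ref{lm:versal} by itself would only tell us that $C_i$ is a well-defined function of $\lambda$, not that this function is constant.
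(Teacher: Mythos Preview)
Your proposal is correct and follows essentially the same path as the paper's own proof: reduce via Proposition~\ref{correction} to showing each $C_i$ is universal, invoke Lemma~\ref{lm:versal} to see that $C_i$ depends only on the cross ratio $\lambda$, and then exhibit a single witness pair with irreducible $X_4$ on which every value of $\lambda$ occurs, forcing the function $\lambda\mapsto C_i(\lambda)$ to be constant. The paper supplies precisely the explicit witness you anticipate---a $9$-parameter family of plane quartics $g=t_1+t_2u+t_3v+t_4u^2+t_5uv+t_6v^2+t_7u^2v+t_8uv^2+uv(u-v)(u-tv)$ over $Y=\Spec k\bigl[t_1,\dots,t_8,t,\tfrac{1}{t},\tfrac{1}{t-1}\bigr]$ with $F=\bb P^2_k\times Y$---and checks directly that $X_4\cong\Spec k\bigl[t,\tfrac{1}{t},\tfrac{1}{t-1}\bigr]$ is irreducible with cross ratio equal to $t$, so your anticipated obstacle is exactly what the paper addresses.
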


 \begin{proof}
 By Lemma~\ref{lm:versal}, each $C_i$ depends just on the analytic
type of $D_{\pi(x)}$ at $x$; that is, on the cross ratio of the four
tangents at $x$. By~the last line in Section~\ref{sbSetup},
furthermore, $C_i$ is independent of the choice of $x$ in $Z$. Below,
we exhibit a pair where $X_4$ is irreducible and where any given value
of the cross ratio appears at some $x\in X_4$. It follows that $C_i$ is
independent of the choice of $(F/Y,D)$. Finally, Proposition~\ref{correction} now implies that $C$ is independent too, as desired

To build the pair, say $k$ is the base field, take variables
$t_1,\dotsc,t_8,t,u,v$, and set
 \begin{gather*}
 \mc A:=k\biggl[t_1,\dotsc,t_8,t, \frac{1}t,\frac{1}{t-1}\biggr], \qquad
 \mc B:=A[u,v],\qquad \mc C:=\mc B/(g),
 \end{gather*}
 where
\begin{gather*}
 g:=t_1+t_2u+t_3v+t_4u^2+t_5uv+t_6v^2+t_7u^2v+t_8uv^2+uv(u-v)(u-tv).
 \end{gather*}
 Set $Y:=\Spec \mc A$ and $F := \mathbb P^2_k\times Y$ and $D :=
\overline{\Spec\mc C}$.

 Then $X_4\subset \Spec \mc B$. Its ideal $\mc I$ is generated by the
partial derivatives with respect to $u$ and $v$ of $g$ up to order
three; so $I = (t_1,\dots,t_8,u,v)$. It follows that $X_4= \Spec (\mc
B/\mc I)=\Spec k\big[t, \frac{1}t,\frac{1}{t-1}\big]$. Thus $X_4$ is
irreducible.

Given $c\in k$, let $x\in X_4$ be the point with $t=c$. Then the fiber
$D_{\pi(x)}\subset \mathbb P^2_k$ is equal to the four lines
$uv(u-v)(u-cv)$ through $(0:0:1)$ with cross ratio equal to $c$. Thus
all cross ratios appear in this family, as desired.
 \end{proof}

\subsection*{Acknowledgements}
 Thanks are due to the referee for pointing out our inadvertent
change of notation from~\cite{KP99}, which is discussed immediately
after the proof of Proposition~\ref{prp610}.


\pdfbookmark[1]{References}{ref}
\LastPageEnding

\end{document}